 \newtheorem{thm}{Theorem}
 \newtheorem{prop}[thm]{Proposition}
 \theoremstyle{definition}
 \newtheorem{defn}[thm]{Definition}
 \theoremstyle{definition}
  \theoremstyle{definition}
 \theoremstyle{remark}
 \newtheorem{rem}[thm]{Remark}
 \theoremstyle{remark}
 \theoremstyle{definition}
 \newtheorem{example}[thm]{Example}
\numberwithin{thm}{section}
\numberwithin{equation}{section}
 \newcommand{\Mat}{\mathrm{Mat}}
 \newcommand{\Hom}{\mathrm{Hom}}
 \newcommand{\Frob}{\mathrm{Frob}}
 \newcommand{\Aut}{\mathrm{Aut}}
 \newcommand{\End}{\mathrm{End}}
 \newcommand{\ord}{\mathrm{ord}}
 \newcommand{\Gal}{\mathrm{Gal}}
 \newcommand{\GL}{\mathrm{GL}}
 \newcommand{\sep}{\mathrm{sep}}
 \newcommand{\rank}{\mathrm{rank}}
 \newcommand{\Tr}{\mathrm{Tr}}
 \renewcommand{\mod}{\mathrm{mod}}
 \newcommand{\disc}{\mathrm{disc}}
 \newcommand{\fl}{\mathfrak l}
 \newcommand{\fp}{\mathfrak p}
 \newcommand{\fq}{\mathfrak q}
 \newcommand{\fM}{\mathfrak M}
 \newcommand{\cO}{\mathcal{O}}
 \newcommand{\cE}{\mathcal{E}}
 \newcommand{\cF}{\mathcal{F}}
 \newcommand{\F}{\mathbb{F}}
 \newcommand{\Nr}{\mathrm{Nr}}
 \renewcommand{\disc}{\mathrm{disc}}
 \newcommand{\To}{\longrightarrow}
  \newcommand{\Mod}[1]{\ (\mathrm{mod}\ #1)}
\begin{document}
	
	\title[Endomorphism rings and Frobenius matrices of Drinfeld modules]{Computing endomorphism rings and Frobenius matrices of Drinfeld modules}
	
	\author{Sumita Garai}
	\address{
		Department of Mathematics, 
		Pennsylvania State University,
		University Park, PA 16802, USA
	} 
	\email{sxg386@psu.edu}

	\author{Mihran Papikian}
	\address{
		Department of Mathematics, 
		Pennsylvania State University,
		University Park, PA 16802, USA
	} 
	\email{papikian@psu.edu}

	
	\subjclass[2010]{11G09, 11R58}
	
	\keywords{Drinfeld modules, endomorphism rings, Gorenstein rings}
	
	\begin{abstract}  
		Let $\mathbb{F}_q[T]$ be the polynomial ring over a finite field $\mathbb{F}_q$. 
		We study the endomorphism rings of Drinfeld $\mathbb{F}_q[T]$-modules of arbitrary rank over finite fields. We compare 
		the endomorphism rings to their subrings generated by the Frobenius endomorphism and deduce from this 
		a refinement of a reciprocity law for division fields of Drinfeld modules proved in our earlier paper. We then use 
		these results to give an efficient algorithm for computing the endomorphism rings and discuss some 
		interesting examples produced by our algorithm. 
	\end{abstract}
	
	\maketitle


\section{Introduction} 

\subsection{Drinfeld modules}
We first recall some basic concepts from the theory of Drinfeld modules. 

Let $\F_q$ be a finite field with $q$ elements and $A=\F_q[T]$ be the polynomial ring over $\F_q$ in an indeterminate $T$. 
Let $F=\F_q(T)$ be the field of fractions of $A$. We will call a nonzero prime ideal of $A$ simply a \textit{prime} of $A$. 
Given a prime $\fp$ of $A$, we denote by $A_\fp$ (resp. $F_\fp$) the completion of $A$ at $\fp$ (resp. the field of 
fractions of $A_\fp$). 
By abuse of notation we will denote the monic generator of $\fp$ by the same symbol. 

Let $L$ be a field equipped with a structure $\gamma: A\to L$ of an 
$A$-algebra. Let $\tau$ be the Frobenius 
endomorphism of $L$ relative to $\F_q$, that is, the map $\alpha\mapsto \alpha^q$. Let $L\{\tau\}$ 
be the noncommutative ring of polynomials in $\tau$ with coefficients in $L$ 
and the commutation rule $\tau c=c^q\tau$, $c\in L$. 
A \textit{Drinfeld module of rank $r\geq 1$ defined $L$} is a ring homomorphism 
$\phi: A \to L\{\tau\} $, $a\mapsto\phi_a$, 
uniquely determined by the image of $T$
$$
\phi_T=\gamma(T)+\sum_{i=1}^{r} g_i(T) \tau^i,\quad g_r(T)\neq 0. 
$$

The \textit{endomorphism ring} of $\phi$ is the centralizer of $\phi(A)$ in $L\{\tau\}$:
$$
\End_L(\phi)=\{u\in L\{\tau\}\mid u\phi_T=\phi_T u\}. 
$$
It is clear that $\End_L(\phi)$ contains in its center 
the subring $\phi(A)\cong A$, hence is an $A$-algebra. 
It can be shown that $\End_L(\phi)$ is a free $A$-module of rank $\leq r^2$; see \cite{Drinfeld}. 

The Drinfeld module $\phi$ endows the algebraic closure $\overline{L}$ of $L$ with an $A$-module structure, where $a\in A$ 
acts by $\phi_a$. The \textit{$a$-torsion} $\phi[a]\subset \overline{L}$ of $\phi$ is the kernel of $\phi_a$, that is, the set of zeros 
of the polynomial 
$$
\phi_a(x)=\gamma(a)x+\sum_{i=1}^{r\cdot \deg(a)}g_i(a)x^{q^i}\in L[x]. 
$$
It is easy to see that $\phi[a]$ 
has a natural structure of an $A$-module, where $A$ acts via $\phi$. Moreover,  
if $a$ is not divisible by $\ker(\gamma)$, then 
$\phi[a]\cong (A/aA)^{\oplus r}$ and $\phi[a]$ is contained in the separable closure $L^\sep\subset \overline{L}$ (since $\phi'_a(x)=\gamma(a)\neq 0$). 

For a prime $\fl\lhd A$ 
different from $\ker(\gamma)$, the \textit{$\fl$-adic Tate module of $\phi$} is the inverse limit    
$$T_\fl(\phi)=\underset{\longleftarrow}{\lim}\ \phi[\fl^n]\cong A_\fl^{\oplus r}.$$ 

\subsection{Main results}\label{ssMR}
Let $\fp\lhd A$ be a prime and $k$ a finite extension of $\F_\fp:=A/\fp$. We consider $k$
as an $A$-algebra via the composition $\gamma: A\to A/\fp\hookrightarrow k$. 
Let $\phi$ be a Drinfeld module of rank $r$ defined over $k$. 
Denote $\cE=\End_k(\phi)$ and $D=\cE\otimes_A F$. 
Let $\pi:=\tau^{[k:\F_q]}\in k\{\tau\}$. It is clear that $\pi$ is in the center of $k\{\tau\}$. In particular, $\pi$ commutes with $\phi(A)$, so  
$\pi\in \cE$. Let $K=F(\pi)$ be the subfield of $D$ generated by $\pi$. The following is well-known 
(cf. \cite{Drinfeld2}, \cite{GekelerFDM}): 
\begin{itemize}
	\item The degree of the field extension $K/F$ divides $r$. 
	\item $D$ is a central division algebra over $K$ of dimension $\left(r/[K:F]\right)^2$. 
		\item There is a unique place of $K$ over the place $\infty=1/T$ of $F$. 
\end{itemize}

The endomorphism rings (and algebras) of Drinfeld modules 
over finite fields have been extensively studied; cf. \cite{Drinfeld2}, \cite{Angles}, \cite{GekelerFDM}, \cite{GekelerTAMS}, \cite{JKYu}. 
They play an important role in the theory of Drinfeld modules, as well as their applications to other areas, such as 
the theory of central simple algebras (cf. \cite{GekelerCR}), the Langlands conjecture over function fields (cf. \cite{Drinfeld2}, \cite{LaumonCDV}), 
or the study of the splitting behavior of primes in certain non-abelian extensions of $F$ (cf. \cite{CP}, \cite{GP}, \cite{CS2}). 
In this paper, we are interested in comparing $\cE$ to $A[\pi]$. We then deduce from this a method for computing $\cE$. 

Throughout the paper, we make the following assumption: 
\begin{equation}\label{eq-assumption}
[K:F]=r.
\end{equation}
This assumption is satisfied if, for example, 
$k=\F_\fp$ (cf. \cite[Prop. 2.1]{GP}) or $\phi[\fp]\cong (A/\fp)^{r-1}$, i.e., $\phi$ is ordinary 
(cf. \cite[(2.5)]{LaumonCDV}). It is equivalent to the assumption that the endomorphism algebra $D$ is commutative, or more precisely, 
that $\cE$ is an $A$-order in $K$. In that case, $A[\pi]\subset \cE$ are $A$-orders in $K$, so by the theory of finitely generated modules 
over principal ideal domains we have 
$$
\cE/A[\pi]\cong A/b_1A\times A/b_2A\times \cdots\times A/b_{r-1}A
$$
for uniquely determined nonzero monic polynomials $b_1, \dots, b_{r-1}\in A$ such that 
$$
b_1\mid b_2\mid \cdots \mid b_{r-1}. 
$$
We call the $(r-1)$-tuple $(b_1, \dots, b_{r-1})$ the \textit{Frobenius index} of $\phi$. The first main result of this paper is 
the following: 

\begin{thm}\label{thmFrobIndex}
	For any $1\leq i\leq r-1$, there is a monic 
	polynomial $f_i(x)\in A[x]$ of degree $i$ such that $f_i(\pi)\in b_i\cE$. Moreover, if there is a 
	monic polynomial $g(x)\in A[x]$ of degree $i$ and $b\in A$ such that $g(\pi)\in b\cE$ 
	then $b$ divides $b_i$. 
\end{thm}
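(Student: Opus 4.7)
My plan is to reduce both existence and maximality to local questions at each prime $\fp$ of $A$. Writing $e_k := v_\fp(b_k)$, the $\fp$-completion of the cokernel becomes $\cE_\fp / A_\fp[\pi] \cong \bigoplus_{k=1}^{r-1} A_\fp/\fp^{e_k}$.

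For existence, first I would compute the $k_\fp$-dimension of the subalgebra $k_\fp[\bar\pi] \subset \cE/\fp\cE$, where $k_\fp = A/\fp$. Using a basis of $\cE$ adapted to $A[\pi]$, a direct computation gives $\dim_{k_\fp} k_\fp[\bar\pi] = 1 + n$, where $n$ is the number of indices $k\geq 1$ with $e_k=0$. Since $\fp\mid b_i$ forces $i>n$, the residues of $1, \bar\pi, \ldots, \bar\pi^{i-1}$ already span $k_\fp[\bar\pi]$. Nakayama's lemma, applied to the finitely generated module $(A_\fp/\fp^{e_i})[\bar\pi]$ over the local ring $A_\fp/\fp^{e_i}$, then promotes this to a spanning statement over $A_\fp/\fp^{e_i}$; in particular $\bar\pi^i$ is an $A_\fp/\fp^{e_i}$-linear combination of the earlier powers, giving a monic polynomial of degree $i$ annihilating $\bar\pi$ modulo $\fp^{e_i}\cE_\fp$. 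Combining these polynomials over all $\fp\mid b_i$ via the Chinese remainder isomorphism $A/b_i \cong \prod_{\fp\mid b_i} A_\fp/\fp^{e_i}$ and lifting the coefficients to $A$ produces the global monic $f_i(x)$ of degree $i$ with $f_i(\pi)\in b_i\cE$.

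For maximality, suppose $g\in A[x]$ is monic of degree $i$ with $g(\pi)\in b\cE$. Fix $\fp$ and set $s := v_\fp(b)$; argue by contradiction if $s > e_i$. The vanishing $g(\bar\pi) = 0$ in $\cE_\fp/\fp^s\cE_\fp$ forces the $A_\fp/\fp^s$-submodule $N := \sum_{j<i}(A_\fp/\fp^s)\bar\pi^j$ to equal the whole subalgebra $(A_\fp/\fp^s)[\bar\pi]$. In the adapted basis of $\cE_\fp$ for which $A_\fp[\pi]$ has basis $\{1, \fp^{e_1}e_1, \ldots, \fp^{e_{r-1}}e_{r-1}\}$, I would write the embedding matrix as $M = DT$ with $D = \operatorname{diag}(1, b_1, \ldots, b_{r-1})$ and $T$ of unit determinant. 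The hypothesis translates into solvability over $A_\fp/\fp^s$ of a linear system whose coefficient matrix is the top-left $(i+1)\times(i+1)$ block $M^{**}$ of $M$; crucially, the $i$-th row of $M$ has content exactly $\fp^{e_i}$, so stripping that common factor yields a nontrivial congruence modulo $\fp^{s-e_i}$. Combined with the Fitting-ideal identity $\det M^{**}\in(b_1\cdots b_i)$ for the $(i+1)$-st determinantal ideal of $M$ and a careful $\fp$-valuation analysis of the rows indexed by $0,\ldots,i-1$, this produces the contradiction $s\leq e_i$.

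The main obstacle will be the maximality step, specifically the $\fp$-adic analysis ruling out $s>e_i$. A naive length bound using only that $(A_\fp/\fp^s)[\bar\pi]$ is generated by $i$ elements over $A_\fp/\fp^s$, and hence has $A$-length at most $is$, is too weak: when $s$ exceeds $e_i$ only slightly, the resulting inequality can become tight or vacuous, as one can verify in the case $r=3$, $e_1=1$, $e_2=3$, $s=4$. The finer argument must exploit the full divisibility chain $b_1\mid b_2\mid\cdots\mid b_{r-1}$ together with the Fitting-ideal/Smith normal form structure of $M$, and this is the technical heart of the maximality claim.
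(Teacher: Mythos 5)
Your maximality step is where the proof is genuinely missing. You yourself flag the case $s>e_i$ as ``the technical heart,'' and the sketch offered in its place relies on assertions that do not follow from the invariant-factor decomposition alone: that the hypothesis $g(\pi)\in\fl^{s}\cE_\fl$ translates into a system governed by the top-left $(i+1)\times(i+1)$ block of the embedding matrix, and that this block interacts with the rows in the way you describe. (Here I write $\fl$ for your generic prime, $e_k=v_\fl(b_k)$, $\cE_\fl=\cE\otimes_A A_\fl$.) A Smith-adapted basis only gives $A_\fl[\pi]=\bigoplus_k \fl^{e_k}w_kA_\fl$ inside $\cE_\fl=\bigoplus_k w_kA_\fl$; it does \emph{not} say that the powers $1,\pi,\dots,\pi^{i}$ involve only $w_0,\dots,w_i$, so the relevant linear condition a priori involves all $r$ rows, not just the top block. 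The statement that repairs this is precisely the paper's key lemma (Theorem \ref{thm2}): the adapted basis can be chosen triangular with respect to the power basis, i.e.\ $\cE$ has an $A$-basis $1,\,f_1(\pi)/b_1,\dots,f_{r-1}(\pi)/b_{r-1}$ with $f_k$ monic of degree $k$. Granting that lemma, existence is immediate, and maximality is a short coefficient comparison: writing $g(\pi)/b=a_0+\sum_{k\ge 1}a_kf_k(\pi)/b_k$ with $a_k\in A$, no term with $k>i$ can occur (it would give a nonzero polynomial relation for $\pi$ of degree $<r$), and comparing coefficients of $\pi^i$ in the $F$-basis $1,\pi,\dots,\pi^{r-1}$ of $K$ forces $b\mid b_i$; this is essentially the paper's argument (run there with an extra B\'ezout/gcd step). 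Without Theorem \ref{thm2} or a substitute, your Fitting-ideal plan restates what has to be proved, so the second assertion of the theorem is not established by the proposal.

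The existence half is a genuinely different route from the paper (localization, Nakayama, CRT, versus the paper's inductive construction of the triangular basis), and it can be completed, but as written the Nakayama reduction has a gap. To conclude that $1,\bar\pi,\dots,\bar\pi^{\,i-1}$ generate $M:=$ the image of $A_\fl[\pi]$ in $\cE_\fl/\fl^{e_i}\cE_\fl$, you must check that they span $M\otimes_{A_\fl}(A/\fl)$, whereas your dimension count $1+n$ concerns the further quotient $(A/\fl)[\bar\pi]\subset\cE/\fl\cE$; the natural surjection $M\otimes(A/\fl)\to(A/\fl)[\bar\pi]$ can have nontrivial kernel (e.g.\ $r=3$, $(e_1,e_2)=(1,2)$, $i=2$ gives dimensions $2$ and $1$). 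The inequality you actually need, $\dim_{A/\fl}M\otimes(A/\fl)\le i$, is true, but it comes from $M\cong\bigoplus_{k=0}^{r-1}A_\fl/\fl^{\max(e_i-e_k,0)}$ (with $e_0=0$) and the monotonicity $e_1\le\cdots\le e_{r-1}$, not from counting the $k$ with $e_k=0$. So the existence argument is repairable along your lines; the maximality claim is the real gap, and the most economical fix is to first prove (or invoke) the triangular-basis lemma, after which both halves follow quickly.
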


The proof of this theorem is given in Section \ref{sEndom}. It is based on the existence of a special type bases of $A$-orders;  
this crucial fact about orders is discussed separately in Section \ref{sOrders}. 
As we explain in Remark \ref{rem3.2}, Theorem \ref{thmFrobIndex} can be 
considered as a refinement of the reciprocity law proved in our earlier paper \cite[Thm. 1.2]{GP} (see also \cite[Cor. 2]{CP} for the rank-$2$ case).   

The condition $f_i(\pi)\in b_i\cE$ means that we have an equality $f_i(\pi)=u\phi_{b_i}$ in $k\{\tau\}$ for some 
$u\in \cE$. For given $b_i$ and $f_i$, the validity of the equality 
$f_i(\pi)=u\phi_{b_i}$ can be easily checked using the division algorithm in $k\{\tau\}$. On the other hand, 
a finite list of possible Frobenius indices of $\phi$ can be deduced either by computing the discriminant of $A[\pi]$, or 
by computing an $A$-basis of the integral closure of $A$ in $K$. 
Since we can assume that the coefficients of $f_i(x)\in A[x]$ have degrees less than the degree of $b_i$, the Frobenius 
index of $\pi$ can be determined by performing finitely many calculations. This leads to an efficient algorithm 
for computing the Frobenius index of $\phi$ and an $A$-basis of $\cE$. The algorithm is described in detail in Section \ref{sEndom}. 
We have implemented this algorithm in \texttt{Magma} 
software package.  In Section \ref{sEndom}, the reader will find 
an explicit example of a calculation of the endomorphism ring of a Drinfeld module of rank $3$. 
In the rank-$2$ case, we gave another algorithm for computing $\cE$ in \cite{GP}; the present algorithm 
is different from the one in \cite{GP}, even when specialized to $r=2$. 

A completely different 
algorithm from ours for computing the endomorphism rings of Drinfeld modules was proposed by Kuhn and Pink in \cite{KP}. 
This algorithm works in all cases, without the restriction \eqref{eq-assumption}, and determines a basis of $\cE$ as an $\F_q[\pi]$-module. 
However, it is not quite clear how easily one can deduce from this the $A$-module structure of $\cE$, e.g., determine the Frobenius 
index or the discriminant of $\cE$ over $A$.  We discuss the algorithm of Kuhn and Pink in more detail in Remark \ref{remKP}. 

Next, we explain a theoretical application of Theorem \ref{thmFrobIndex}. 
Let $\Phi: A\to F\{\tau\}$ be a Drinfeld module of rank $r$ over $F$ defined by 
$$
\Phi_T=T+g_1\tau+\cdots+g_r\tau^r. 
$$
(We will always implicitly assume that $\gamma: A\to F$ is the canonical embedding of $A$ into its field of fractions.)
We say that a prime $\fp\lhd A$ is a \textit{prime of good reduction} for $\Phi$ if $\ord_\fp(g_i)\geq 0$ for $1\leq i\leq r-1$, 
and $\ord_\fp(g_r)=0$. (All but finitely many primes 
of $A$ are primes of good reduction for a given Drinfeld module $\Phi$.) 
Let $n\in A$ be a monic polynomial and 
$F(\Phi[n])$ be the splitting field of the polynomial $\Phi_n(x)$; such fields are called \textit{division fields} (or \textit{torsion fields}) of $\Phi$.  
If $\fp$ is a prime of good reduction of $\Phi$ and $\fp\nmid n$, then $\fp$ does not ramify in $F(\Phi[n])$. 
One is interested in the splitting behavior of $\fp$ in $F(\Phi[n])$ as $n$ varies, e.g., 
a ``reciprocity law'' in the form of  
congruence conditions modulo $n$ which guarantee that $\fp$ splits completely $F(\Phi[n])$. 
The primes which split completely in $F(\Phi[n])$ have been studied before, 
e.g. \cite{CS2}, \cite{GP}, \cite{KL}.

We can consider $g_1, \dots, g_r$ as elements of $A_\fp$; denote by 
$\overline{g}$ the image of $g\in A_\fp$ under the canonical homomorphism $A_\fp\to A_\fp/\fp=\F_\fp$. 
The \textit{reduction of $\Phi$ at $\fp$} is the 
Drinfeld module $\phi$ over $\F_\fp$ given by 
$$
\phi_T=\overline{T}+\overline{g_1}\tau+\cdots+\overline{g_r}\tau^r.  
$$
Note that $\phi$ has rank $r$ since $\overline{g_r}\neq 0$. Let $\cE=\End_{\F_\fp}(\phi)$ and $\pi=\tau^{\deg(\fp)}$. 
We have the inclusion of orders $A[\pi]\subset \cE$. Theorem \ref{thmFrobIndex}, or rather its proof, provides an 
explicit basis of $\cE$ as a free $A$-module of rank $r$. With respect to this basis, the action of $\pi$ on $\cE$ by multiplication can be described 
by an explicit matrix $\cF(\fp)\in \Mat_r(A)$ which depends on the Frobenius index of $\phi$, the coefficients 
of the polynomials $f_i$, and the coefficients of the minimal polynomial of $\pi$. We explain in Section \ref{sFrob} that 
under a mild (but subtle) technical assumption on $\cE$, the 
integral matrix $\cF(\fp)$, when reduced modulo $n$, represents the conjugacy class of the Frobenius at $\fp$ 
in $\Gal(F(\Phi[n])/F)\subset \GL_r(A/nA)$. This result in the rank-2 case was proved in \cite[Thm. 1]{CP}. 
The analogue of this result for elliptic curves goes back to Duke and T\'oth \cite{DT}, which was our initial motivation 
for considering this problem in the setting of Drinfeld modules. (For a refinement of the result of Duke and T\'oth 
for elliptic curves see the paper by Centeleghe \cite{Centeleghe}.) 

The technical assumption mentioned in the previous paragraph is the assumption that $\cE\otimes_A A_\fl$ 
is a Gorenstein ring for all primes $\fl\mid n$. It is often satisfied (see Proposition \ref{propGor}), but not always 
when $r\geq 3$. At the end of Section \ref{sFrob} we give an interesting example of $\cE\otimes_A A_\fl$ which 
is not Gorenstein. The study of Gorenstein property of endomorphism rings of abelian varieties, especially the Jacobian varieties 
of modular curves, has played an important role in many fundamental developments in arithmetic geometry (cf. \cite{Mazur}), 
but, as far as we are aware, it has not been studied at all in the context of Drinfeld modules.


\section{Some facts about orders}\label{sOrders}

Let $A$ be a principal ideal domain with field of fraction $F$. Let $f(x)\in A[x]$ be a monic irreducible 
polynomial of degree $r$ with coefficients in $A$. Fix a root $\pi$ of $f(x)$ in $\overline{F}$ 
and denote $K=F(\pi)\subset \overline{F}$. Let $B$ be the integral closure of $A$ in $K$. 

The field $K$ is an $r$-dimensional vector space over $F$. For a given $\alpha\in K$, multiplication by $\alpha$ on $K$ 
defines an $F$-linear transformation $M_\alpha: K\to K$. Let $\Tr_{K/F}(\alpha)\in F$ (resp. norm $\Nr_{K/F}(\alpha)\in F$) 
be the trace (resp. determinant) of $M_\alpha$. The \textit{discriminant} of an $r$-tuple $\alpha_1, \dots, \alpha_r\in K$ is 
$$\disc(\alpha_1, \dots, \alpha_r) =\det\left(\Tr_{K/F}(\alpha_i\alpha_j)\right).
$$ 
The discriminant does not depend of the ordering of the elements $\alpha_j$. 
It is known that $\disc(\alpha_1, \dots, \alpha_r) =0$ if and only if either $K/F$ is inseparable or $\alpha_1, \dots, \alpha_r$ are linearly dependent over $F$. 
Moreover, (cf. \cite[Ch. III]{SerreLF}): 
\begin{enumerate}
	\item[(i)] $\disc(1, \pi, \dots, \pi^{n-1}) =(-1)^{n(n-1)/2} \Nr_{K/F}(f'(\pi))$. 
	\item[(ii)] If $\begin{bmatrix} \beta_1\\ \vdots \\ \beta_r\end{bmatrix}=M \begin{bmatrix} \alpha_1\\ \vdots \\ \alpha_r\end{bmatrix}$ with $M\in \Mat_r(F)$, then 
	$$
	\disc(\beta_1, \dots, \beta_r)=\det(M)^2\cdot \disc(\alpha_1, \dots, \alpha_r). 
	$$
\end{enumerate}

An \textit{$A$-order} in $K$ is an $A$-subalgebra $\cO$ of $B$ with the same unity element and such that $B/\cO$ has finite cardinality. 
Note that any $A$-order $\cO$ is a free $A$-modules of rank $r$. An example of an $A$-order in $K$ is $$A[\pi]=A+A\pi+\cdots+A\pi^{r-1}.$$ 
Let $\cO\subset \cO'$ be two $A$-orders in $K$. Since both modules $\cO$ and $\cO'$ have the same rank over $A$, and both contain $1$, 
we have
$$
\cO'/\cO\cong A/b_1 A\times A/b_2 A\times \cdots \times A/b_{r-1}A,
$$
for uniquely determined (up to multiplication by units in $A$) non-zero elements $b_1, \dots, b_{r-1}\in A$ such that 
$$
b_1\mid b_2\mid \cdots \mid b_{r-1}. 
$$
This is an easy consequence of the theory of finitely generated modules over principal ideal domains; cf. \cite[Thm. 12.5]{DF}. 
We call the ideal $\chi(\cO'/\cO)$ of $A$ generated by $\prod_{i=1}^{r-1}b_i$ 
the \textit{index} of $\cO$ in $\cO'$, and $(b_1, \dots, b_{r-1})$ the \textit{refined index} of $\cO$ in $\cO'$ 
(in a more standard terminology, the elements $b_1, \dots, b_{r-1}\in A$ are the \textit{invariant factors} of $\cO'/\cO$).   

Let $\cO$ be an $A$-order in $K$. Let $\alpha_1, \dots, \alpha_r$ be a basis of $\cO$ over $A$: 
$$
\cO=A\alpha_1+\cdots+A\alpha_r. 
$$
Define the discriminant $\disc(\cO)$ of $\cO$ to be the ideal of $A$ generated by 
$\disc(\alpha_1, \dots, \alpha_r)$.
By (ii) above, $\disc(\cO)$ does not depend on the choice of a basis $\alpha_1, \dots, \alpha_r$. Moreover, 
by the same property (see also \cite[Ch. III]{SerreLF}), if $\cO\subseteq \cO'$ is an inclusion of orders, then 
\begin{equation}\label{eq-discchi}
\disc(\cO)=\chi(\cO'/\cO)^2\cdot \disc(\cO'),
\end{equation}
and for an inclusion of orders $\cO\subseteq \cO'\subseteq \cO''$  we have 
\begin{equation}\label{eq-chichi}
\chi(\cO''/\cO)=\chi(\cO''/\cO')\cdot \chi(\cO'/\cO). 
\end{equation}

The following theorem is essentially Theorem 13 in \cite{Marcus}. Since this fact is crucial for our later purposes and we 
need the statement in a more general setting than in \cite{Marcus}, for the sake of completeness we give the proof. 

\begin{thm}\label{thm2}
Assume $\cO$ is an $A$-orders in $K$ such that $A[\pi]\subset \cO$. Let $(b_1, \dots, b_{r-1})$ be the refined 
index of $A[\pi]$ in $\cO$. There are polynomial $f_i(x)\in A[x]$, $1\leq i\leq r-1$, such that $f_i$ is monic, $\deg(f_i)=i$, and  
$$
1, \frac{f_1(\pi)}{b_1}, \cdots, \frac{f_{r-1}(\pi)}{b_{r-1}}, 
$$   
is an $A$-basis of $\cO$. 
\end{thm}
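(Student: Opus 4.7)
The plan is to construct the desired basis by filtering $\cO$ according to $\pi$-degree. I set $\cO_i := \cO \cap F[\pi]^{\leq i}$ for $0 \leq i \leq r-1$, where $F[\pi]^{\leq i}$ denotes the $F$-span of $1,\pi,\dots,\pi^i$. Each $\cO_i$ is a free $A$-module of rank $i+1$, and the ``leading $\pi$-coefficient'' map $\psi_i\colon \cO_i\to F$ has kernel $\cO_{i-1}$ and image a nonzero fractional $A$-ideal, which I write as $(1/d_i)A$ for a unique monic $d_i\in A$ (with $d_0 = 1$). Using that $\cO$ is a ring gives $\pi\,\cO_{i-1}\subset\cO_i$, and applying $\psi_i$ to $\pi v$ for $v\in\cO_{i-1}$ with $\psi_{i-1}(v)=1/d_{i-1}$ immediately yields the divisibility $d_{i-1}\mid d_i$.

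The central step will be an induction on $j$ producing elements $\gamma_j\in\cO_j$ with $d_j\gamma_j = f_j(\pi)$ for some monic $f_j\in A[x]$ of degree $j$. For the induction to run I strengthen the hypothesis to include $d_j\cO_j\subset A[\pi]^{\leq j}$, which will be indispensable. I expect the inductive step to be the main obstacle: modifying a candidate $\gamma_j$ within its coset $\gamma_j + \cO_{j-1}$ cannot by itself force $d_j\gamma_j$ into $A[\pi]$, so any direct approach stalls. The decisive trick is to obtain $\gamma_j$ from $\pi\gamma_{j-1}\in\cO_j$: since $\psi_j(\pi\gamma_{j-1}) = 1/d_{j-1}$, for any choice of $\gamma_j$ with $\psi_j(\gamma_j) = 1/d_j$ there is a unique decomposition $\pi\gamma_{j-1} = (d_j/d_{j-1})\gamma_j + \lambda$ with $\lambda\in\cO_{j-1}$. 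Multiplying through by $d_{j-1}$ gives
\[
 d_j\gamma_j = \pi\,f_{j-1}(\pi) - d_{j-1}\lambda,
\]
whose first term lies in $A[\pi]$ and is monic of degree $j$, and whose second term lies in $A[\pi]^{\leq j-1}$ by the strengthened inductive hypothesis. Hence $d_j\gamma_j = f_j(\pi)$ for a monic $f_j\in A[x]$ of degree $j$, and the strengthened hypothesis at level $j$ follows from the decomposition $\cO_j = \cO_{j-1} + A\gamma_j$ together with $d_j/d_{j-1}\in A$.

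To finish, I verify that $\{1,\gamma_1,\dots,\gamma_{r-1}\}$ is an $A$-basis of $\cO$ (immediate from the flag $\cO_0 \subset \cO_1 \subset \cdots \subset \cO_{r-1} = \cO$ and the normalization of the leading coefficients $\psi_j(\gamma_j) = 1/d_j$) and then identify $d_i$ with the invariant factor $b_i$ by exhibiting an isomorphism
\[
 \Phi\colon \bigoplus_{i=1}^{r-1} A/d_iA \;\stackrel{\sim}{\longrightarrow}\; \cO/A[\pi], \qquad (\bar c_i) \longmapsto \sum_{i=1}^{r-1} c_i\bar\gamma_i.
\]
Well-definedness of $\Phi$ is clear from $d_i\gamma_i\in A[\pi]$, and surjectivity is clear because $1,\gamma_1,\dots,\gamma_{r-1}$ generate $\cO$ over $A$. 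For injectivity, if $\sum c_i\gamma_i\in A[\pi]$ then $\sum c_i f_i(\pi)/d_i$ has $A$-coefficients as a polynomial in $\pi$; reading off the $\pi^{r-1}$-coefficient forces $d_{r-1}\mid c_{r-1}$, and subtracting $(c_{r-1}/d_{r-1})f_{r-1}(\pi)\in A[\pi]$ and iterating yields $d_i\mid c_i$ for every $i$. Since the $d_i$ already satisfy $d_1\mid\cdots\mid d_{r-1}$, uniqueness of invariant factors forces $d_i = b_i$, completing the argument.
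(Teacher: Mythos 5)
Your proposal is correct and follows essentially the same route as the paper: both filter $\cO$ by degree in $\pi$, use the leading-coefficient (projection) map to split off a rank-one piece at each stage, build $f_j$ inductively from $\pi f_{j-1}(\pi)$ plus a lower-degree correction with $A$-coefficients, and identify the $d_i$ with the invariant factors $b_i$ by uniqueness. The only cosmetic difference is that the paper clears denominators up front via a generator $d$ of $\chi(\cO/A[\pi])$ and projects onto $A$, whereas you work directly with fractional ideals $(1/d_i)A$; the substance of the induction is identical.
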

\begin{proof} Fix a generator $d$ of the ideal $\chi(\cO/A[\pi])$. 
For each $k$, $1\leq k\leq r$, let $G_k$ be the free $A$-submodule of $K$ generated by $1/d, \pi/d, \dots, \pi^{k-1}/d$. 
Let $\cO_k=\cO\cap G_k$. 
Note that $\cO_1=A$, $\rank_A \cO_k=k$ (since $A+A\pi+\cdots+A\pi^k\subset \cO_k$) and $\cO_n=\cO$ 
(since for any $\alpha\in \cO$ we have $d\alpha\in A[\pi]$). 

We will define $d_1\mid d_2\mid \cdots\mid d_{n-1}\mid d$ and monic  
polynomials $f_i(x)\in A[x]$ of degree $i$, $1\leq i\leq r-1$, such that for each $1\leq k\leq r$
$$1, f_1(\pi)/d_1, \dots, f_{k-1}(\pi)/d_{k-1}$$ is an $A$-basis 
of $\cO_k$. This is certainly true for $k=1$. Now fix some $1\leq k< r$ and assume we were able to prove this claim for $\cO_{k}$. 
Let 
$$
\eta: G_{k+1}\To A, \qquad \sum_{i=1}^{k} a_i \frac{\pi^i}{d}\longmapsto a_{k},
$$
be the projection onto the last factor. 
Let $I=\eta(\cO_{k+1})$ be the image of $\cO_{k+1}$ under this homomorphism. Clearly $I$ is an ideal of $A$, thus can be 
generated by a single element. Fix some $\beta\in \cO_{k+1}$ such that $\eta(\beta)$ generates $I$. Note that $I\neq 0$ (since $\pi^k\in \cO_{k+1}$ 
maps to $d\neq 0$), and 
$\cO_{k}\subseteq \ker(\eta|\cO_{k+1})$. Since $I$ is a free $A$-module of rank $1$, comparing the ranks we conclude that 
$\cO_{k}=\ker(\eta|\cO_{k+1})$ and $\cO_{k+1}=\cO_k\oplus A\beta$. It remains to show that $\beta=f_k(\pi)/d_k$. We have 
$$
\frac{d}{d_{k-1}}=\eta\left(\frac{\pi f_{k-1}(\pi)}{d_{k-1}}\right)
$$
and this is in $I$. It follows that $a \eta(\beta)=d/d_{k-1}$ for some $a\in A$. Defining $d_k=a d_{k-1}$, we have $\eta(\beta)=d/d_k$, 
which implies that $\beta=f_k(\pi)/d_k$ for some $f_k(x)=x^k+$lower degree terms. Note that by construction $d_{k-1}\mid d_k\mid d$, 
so it remains to show that the coefficients of $f_k(x)$ are in $A$. However, since $f_k(\pi)/d_{k-1}=a\beta\in \cO_{k+1}$, we have 
$$
\frac{f_k(\pi)-\pi f_{k-1}(\pi)}{d_{k-1}}=:\gamma\in \cO_{k+1}. 
$$
On the other hand, $\eta(\gamma)=a\eta(\beta)-d/d_{k-1}=0$, so in fact, $\gamma\in \cO_k$. Using our basis for $\cO_k$ 
we can write $\gamma=g(\pi)/d_{k-1}$ for some $g(x)\in A[x]$ having degree $<k$. This implies that $f_k(\pi)-\pi f_{k-1}(\pi)=g(\pi)$. 
Since the degree of minimal polynomial of $\pi$ over $F$ is $r$ and the degree of $f_k(x)-\pi f_{k-1}(x)-g(x)$ is strictly less than $r$, 
we must have $f_k(x)=x f_{k-1}(x)+g(x)\in A[x]$. 

It remains to show that $(d_1, \dots, d_{r-1})$ is the refined index of $A[\pi]$ in $\cO$. Since the polynomials 
$f_i(x)$ are monic, the elements $1, f_1(\pi), \dots, f_{n-1}(\pi)$ form an $A$-basis of $A[\pi]$ (the transition matrix from 
$1, \pi, \dots, \pi^{n-1}$ to this basis is upper triangular with $1$'s on the diagonal, so has determinant $1$).  But now, using 
$1, f_1(\pi)/d_1, \dots, f_{n-1}(\pi)/d_{n-1}$ as a basis of $\cO$, we obviously have 
$\cO/A[\pi]\cong A/d_1A\times \cdots \times A/d_{r-1}A$ with $d_1\mid \cdots \mid d_{r-1}$. Since the invariant factors of $\cO/A[\pi]$ 
are unique, up to multiplication by units, $d_1, d_2, \dots, d_{r-1}$ must be the invariant factors. 
\end{proof}

\begin{rem}
	The polynomials $f_i\in A[x]$, $1\leq i\leq r-1$, in Theorem \ref{thm2} are not unique. It is easy to see that they 
	can be replaced by any other monic 
	polynomials $g_i\in A[x]$ such that $g_i$ has degree $i$ and all $g_i(\pi)/b_i$ are in $\cO$. 
\end{rem}

\begin{prop}\label{thm3}
	Let $\cO$ is an $A$-orders in $K$ such that $A[\pi]\subset \cO$. Let $(b_1, \dots, b_{r-1})$ be the refined 
	index of $A[\pi]$ in $\cO$.
	\begin{enumerate} 
		\item If $i+j<r$, then $b_ib_j\mid b_{i+j}$. 
	\item For any $i<r$, we have $b_1^i\mid b_i$.  
	\item $b_1^{n(n-1)}\mid \disc(A[\pi])$. 
	\item If $b_1\neq 1$, then the inclusion of ideal $(b_1)\supsetneq (b_2)\supsetneq\cdots\supsetneq (b_{r-1})$ 
	is strict. 
	\end{enumerate}
\end{prop}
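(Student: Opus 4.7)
The plan is to derive all four parts from the explicit basis of $\cO$ produced by Theorem~\ref{thm2}, namely
$$
\cO = A\cdot 1 + A\cdot \frac{f_1(\pi)}{b_1} + \cdots + A\cdot \frac{f_{r-1}(\pi)}{b_{r-1}},
$$
with each $f_i\in A[x]$ monic of degree $i$. Part (1) is the substantive step; parts (2)--(4) will fall out of (1) by short formal arguments.

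For (1), I would multiply two basis elements. Since $f_i, f_j$ are monic of degrees $i, j$ with $i+j<r$, the product $f_i(\pi)f_j(\pi)$ lies in $A[\pi]$, is a polynomial in $\pi$ of degree exactly $i+j$, and has leading coefficient $1$; crucially, no reduction via the minimal polynomial of $\pi$ intervenes because $i+j<r$. Since $\{1, f_1(\pi),\ldots, f_{r-1}(\pi)\}$ is also an $A$-basis of $A[\pi]$ (the change of basis from $\{1, \pi,\ldots,\pi^{r-1}\}$ being upper triangular with $1$'s on the diagonal), I can expand
$$
f_i(\pi)f_j(\pi) = c_0 + \sum_{k=1}^{i+j-1} c_k\, f_k(\pi) + f_{i+j}(\pi), \qquad c_k \in A,
$$
with coefficient of $f_{i+j}(\pi)$ equal to $1$ and no contributions from $f_k(\pi)$ with $k>i+j$. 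Dividing by $b_ib_j$ rewrites the product in the basis of $\cO$ with the coefficient of $f_{i+j}(\pi)/b_{i+j}$ equal to $b_{i+j}/(b_ib_j)$. Since the product lies in $\cO$ and the basis expansion is unique with coefficients in $A$, this forces $b_ib_j\mid b_{i+j}$. The single point of care in the whole argument is this multiplication step, which hinges on $i+j<r$ together with the monicity of the $f_i$.

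Parts (2)--(4) then follow formally. Part (2) is a quick induction on $i$: $b_1\mid b_1$ trivially, and if $b_1^i\mid b_i$ for some $i\leq r-2$, then (1) applied with $j=1$ gives $b_1b_i\mid b_{i+1}$, so $b_1^{i+1}\mid b_{i+1}$. For part (3), combining (2) with $\chi(\cO/A[\pi])=(b_1b_2\cdots b_{r-1})$ (up to units) yields $b_1^{1+2+\cdots+(r-1)}=b_1^{r(r-1)/2}\mid \chi(\cO/A[\pi])$, hence $b_1^{r(r-1)}\mid \chi(\cO/A[\pi])^2\mid \disc(A[\pi])$ by \eqref{eq-discchi} (I read the ``$n$'' in the stated exponent as a typo for ``$r$''). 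For part (4), assuming $b_1\neq 1$, (1) with $j=1$ gives $b_1b_i\mid b_{i+1}$ for every $1\leq i\leq r-2$, so $b_{i+1}/b_i$ is a multiple of the non-unit $b_1$, which forces $(b_i)\supsetneq (b_{i+1})$.
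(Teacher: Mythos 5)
Your proposal is correct and follows essentially the same route as the paper: both prove (1) by expanding the product $f_i(\pi)f_j(\pi)/(b_ib_j)\in\cO$ in the basis from Theorem \ref{thm2}, noting that no basis element of index $>i+j$ can occur since $\pi$ has degree $r$ over $F$, and comparing the leading ($\pi^{i+j}$) coefficient to force $b_ib_j\mid b_{i+j}$, with (2)--(4) then deduced formally exactly as you do (and your reading of the exponent $n(n-1)$ as $r(r-1)$ matches the paper's intent).
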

\begin{proof} Let $f_1(\pi)/b_1, \dots, f_{r-1}(\pi)/b_{r-1}$ be the $A$-basis of $\cO$ supplied by Theorem \ref{thm2}. 
	Consider $\alpha=f_i(\pi)f_j(\pi)/b_ib_j \in \cO$. We can express $\alpha$ as an $A$-linear combination  
	$\alpha=a_0+\sum_{k=1}^{i+j} a_kf_k(\pi)/b_k$. 
	(The basis elements $f_k(\pi)/b_k$ for $k>i+j$ do not appear in this linear combination since otherwise $\pi$ 
	would a root of a non-zero polynomial of degree $<r$.) 
	Comparing the coefficients of $\pi^{i+j}$ on both sides we get $1/b_ib_j=a_{i+j}/b_{i+j}$ for some nonzero $a_{i+j}\in A$. 
	Thus, $b_ib_j$ divides $b_{i+j}$. This proves (1). 
	
	(2) and (4) immediately follow from (1).  Next, by \eqref{eq-discchi}, 
	$\chi(\cO/A[\pi])^2=(b_1\cdots b_{n-1})^2$ 
	divides $\disc(A[\pi])$. On the other hand, $b_1^i$ divides $b_i$, so 
	$b_1^{2(1+2+3+\cdots+(n-1))}=b_1^{n(n-1)}$ divides $\disc(A[\pi])$. This proves (3). 
\end{proof}


\section{Endomorphism rings of Drinfeld modules}\label{sEndom} 
Let the notation and assumptions be as at the beginning of Section \ref{ssMR}. In particular, $\phi$ 
is a Drinfeld module of rank $r$ over a finite extension $k$ of $\F_\fp$, $\cE=\End_k(\phi)$, and $A[\pi]\subset \cE$ 
is the suborder generated by the Frobenius endomorphism of $\phi$. As in Section \ref{ssMR}, assume \eqref{eq-assumption}. 

 \begin{thm}\label{thmFrobIndex2}
 	Let $(b_1, \dots, b_{r-1})$ be the Frobenius index of $\phi$. For any $1\leq i\leq r-1$, there is a monic 
 	polynomial $f_i(x)\in A[x]$ of degree $i$ such that $f_i(\pi)\in b_i\cE$. Moreover, if there is a 
 	monic polynomial $g(x)\in A[x]$ of degree $i$ and $b\in A$ such that $g(\pi)\in b\cE$ 
 	then $b$ divides $b_i$. 
 \end{thm}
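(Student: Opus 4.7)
The plan is to reduce everything to Theorem \ref{thm2}. Under the standing assumption \eqref{eq-assumption}, the endomorphism algebra $D$ is commutative and $\cE$ is an $A$-order in $K=F(\pi)$; moreover $A[\pi]\subset \cE$ is a suborder whose refined index, by the very definition given in Section \ref{ssMR}, is the Frobenius index $(b_1,\dots,b_{r-1})$ of $\phi$. So the pair $A[\pi]\subset \cE$ satisfies the hypotheses of Theorem \ref{thm2}.

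For the existence part, I would simply apply Theorem \ref{thm2} to this pair to obtain monic polynomials $f_i(x)\in A[x]$ of degree $i$, $1\leq i\leq r-1$, such that
$$
1,\ \frac{f_1(\pi)}{b_1},\ \dots,\ \frac{f_{r-1}(\pi)}{b_{r-1}}
$$
is an $A$-basis of $\cE$. In particular $f_i(\pi)/b_i\in \cE$, which is exactly the statement $f_i(\pi)\in b_i\cE$.

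For the minimality part, suppose $g(x)\in A[x]$ is monic of degree $i$ and $b\in A$ satisfies $g(\pi)\in b\cE$. Then $g(\pi)/b\in \cE$, so expanding in the basis above gives
$$
\frac{g(\pi)}{b}=a_0+\sum_{k=1}^{r-1}a_k\frac{f_k(\pi)}{b_k}
$$
for some $a_0,\dots,a_{r-1}\in A$. Since the minimal polynomial of $\pi$ over $F$ has degree $r$, the elements $1,\pi,\dots,\pi^{r-1}$ form an $F$-basis of $K$, so this identity in $K$ is equivalent to an identity of polynomials in $\pi$ of degree less than $r$. Because each $f_k$ is monic of degree $k$, the right-hand side, viewed as a polynomial in $\pi$, has degree equal to the largest index $k$ with $a_k\neq 0$; while the left-hand side has degree $i$. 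Hence $a_k=0$ for all $k>i$, and comparing the leading (i.e. $\pi^i$) coefficients yields $1/b=a_i/b_i$, that is, $b_i=a_i b$, so $b\mid b_i$.

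The proof is essentially routine given Theorem \ref{thm2}; the only thing to be a bit careful about is the polynomial-degree comparison, which relies on $f_i$ being \emph{monic} (so that no cancellation of the leading term can occur in the sum) and on $\pi$ having degree exactly $r$ over $F$. Both are guaranteed by Theorem \ref{thm2} and by the assumption \eqref{eq-assumption}, respectively, so there is no substantial obstacle.
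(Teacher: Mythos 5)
Your proof is correct, and the minimality step is a genuine (and cleaner) shortcut compared with the paper's argument. Both proofs start identically: apply Theorem \ref{thm2} to $A[\pi]\subset\cE$ (legitimate because \eqref{eq-assumption} makes $\cE$ an $A$-order in $K$ with refined index equal to the Frobenius index) and expand $g(\pi)/b$ in the basis $1, f_1(\pi)/b_1,\dots,f_{r-1}(\pi)/b_{r-1}$. The paper then clears denominators and works inside $A[\pi]$: it first rules out a prime $\fq$ dividing $b$ but not $b_i$ via a B\'ezout combination $z_1b_ig+z_2\fq f_i$, and then extracts full divisibility $b\mid b_i$ through a factorization $b=x_1y_1$, $b_i=x_2y_2z$ and a comparison of the resulting leading coefficients. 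You instead compare coefficients directly in the $F$-basis $1,\pi,\dots,\pi^{r-1}$ of $K$: monicity of the $f_k$ forces $a_k=0$ for $k>i$, and the $\pi^i$-coefficient gives $1/b=a_i/b_i$, i.e.\ $b_i=a_ib$, so $b\mid b_i$ in one stroke. This is valid -- linear independence of $1,\pi,\dots,\pi^{r-1}$ over $F$ is exactly \eqref{eq-assumption}, and nothing in the argument requires staying inside $A[\pi]$ -- so your route buys a shorter proof that avoids the gcd and prime-factor bookkeeping, while the paper's version keeps all manipulations integral, which is closer in spirit to how the algorithm of Section \ref{sEndom} actually tests candidate indices.
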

\begin{proof} Theorem \ref{thm2}, applied to $A[\pi]\subset \cE$, implies the existence of monic polynomials $f_i$ of degree $i$, $1\leq i\leq r-1$, 
	such that $f_i(\pi)\in b_i\cE$. 
	
	Now assume there is a monic polynomial $g(x)\in A[x]$ of degree $i$ and $b\in A$ such that $g(\pi)\in b\cE$.  
	Suppose there is a prime $\fq\in A$ such that $\fq\mid b$ but $\fq\nmid b_i$. Then we can find $z_1, z_2\in A$ such that 
	$z_1 b_i+z_2\fq=1$. The polynomial $z_1b_i g(x)+z_2\fq f_i(x)\in A[x]$ is monic of degree $i$, and 
	$(z_1b_i g(\pi)+z_2\fq f_i(\pi))/\fq b_i\in \cE$. Since the largest exponent of $\pi$ in $z_1b_i g(\pi)+z_2\fq f_i(\pi)$ is $i$, there 
	exist $a_0, \dots, a_i\in A$ such that 
	$$
	\frac{z_1b_i g(\pi)+z_2\fq f_i(\pi)}{\fq b_i}=a_0+a_1\frac{f_1(\pi)}{b_1}+\cdots+a_i\frac{f_i(\pi)}{b_i}. 
	$$
	Multiplying both sides by $b_i\fq$, we get an equation in $A[\pi]$ where the left hand side is a monic polynomial in $\pi$ of degree $i$, while 
	the right hand side has degree $i$ in $\pi$ and leading coefficient $a_i\fq$. 
	This implies that $\pi$ satisfies a polynomial in $A[x]$ of degree less than $r$, contradicting 
	\eqref{eq-assumption}. Hence every prime divisor of $b$ is also a divisor of $b_i$. Write 
	$b=x_1y_1$ and $b_i=x_2y_2z$, where $x_1$ and $x_2$ have the same prime divisors, $y_1$ and $y_2$ 
	have the same prime divisors, $x_2\mid x_1$, $y_1\mid y_2$, and $\gcd(x_1, y_1)=\gcd(x_2, y_2)=\gcd(x_2, z)=\gcd(y_2, z)=1$. 
	As earlier, since $g(\pi)/b\in \cE$, we can write 
	$$
	\frac{g(\pi)}{x_1y_1}=a_0+a_1\frac{f_1(\pi)}{b_1}+\cdots+a_i\frac{f_i(\pi)}{x_2y_2z}. 
	$$
	After multiplying both sides by $x_1y_2z$, the coefficient of $\pi^i$ on the left hand side of the resulting equation is $y_2z/y_1$, 
	whereas on the right hand side the corresponding coefficient is $a_ix_1/x_2$. Since $x_1/x_2$ is coprime to $y_2z/y_1$, 
	$x_1$ and $x_2$ must be equal, up to multiplicative units. Since $y_1\mid y_2$, we see that $b$ divides $b_i$. 
\end{proof}

\begin{rem}\label{rem3.2} The condition $f_i(\pi)\in b_i\cE$ means that we have an equality $f_i(\pi)=u\phi_{b_i}$ in $k\{\tau\}$ for some 
	$u\in \cE$. From this it is obvious that $f_i(\pi)$ acts as $0$ on $\phi[b_i]$. Conversely, it is not hard to prove that if 
	$b$ is coprime to $\fp$ and $g(\pi)$ acts as $0$ on $\phi[b]$, then $g(\pi)\in b\cE$; see the proof of Theorem 1.2 in \cite{GP}.  Hence the 
	previous theorem essentially says that $b_i\in A$ is the element of largest degree such that $\pi$ acting on $\phi[b_i]$ 
	satisfies a polynomial of degree $i$, whereas the minimal polynomial of $\pi$ acting on any Tate module $T_\fl(\phi)$ has degree $r$. 
	
	Now suppose $\phi$ is the reduction at $\fp$ of a Drinfeld module $\Phi$ over $F$. Let $n\in A$ 
	be a polynomial not divisible by $\fp$. Assume $r$ is coprime to the characteristic of $F$. In \cite{GP}, we proved a reciprocity law 
	which says that  $\fp$ splits completely in the Galois extension $F(\Phi[n])$ of $F$ if and only if $n$ divides both $b_1$ and 
	$a_{r-1}+r$, where $a_{r-1}$ is the coefficient of $x^{r-1}$ in the minimal polynomial of $\pi$. The starting point of the 
	proof of this result is the observation that we have an isomorphism $\Phi[n]\cong \phi[n]$ compatible with the action of the Frobenius at $\fp$ 
	on $\Phi[n]$ and the action of $\pi$ on $\phi[n]$. Then the proof proceeds by showing that $\pi$ acts as a scalar 
	on $\phi[n]$ if and only if $n\mid b_1$. 
As follows from the previous paragraph, this last fact is a special case of Theorem \ref{thmFrobIndex2}. Thus, 	
Theorem \ref{thmFrobIndex2} is a refinement of our reciprocity law in the sense that we give a Galois-theoretic 
interpretation of all $b_i$'s, not just $b_1$. Moreover, as we will see Section \ref{sFrob}, 
$b_1, \dots, b_{r-1}$ appear in a 
matrix representing the Frobenius at $\fp$ in $\Gal(F(\Phi[n])/F)\subseteq \GL_r(A/nA)$. 
\end{rem}

Theorem \ref{thmFrobIndex2} can be used to give an efficient algorithm for 
computing the Frobenius index of a Drinfeld module and an $A$-basis of its endomorphism ring. 
The algorithm has two main steps. 

\vspace{0.1in}

\textbf{Step 1.} Let $\phi$ be a Drinfeld module of rank $r$ over $k$ given in terms of $\phi_T\in k\{\tau\}$. 
Let $B$ be the integral closure of $A$ in $K$. 

Start by computing the minimal polynomial $P(x)\in A[x]$ of $\pi$ over $A$.  
(Note that under our assumption \eqref{eq-assumption}, $P(x)$ is also the 
characteristic polynomial of the Frobenius automorphism $\alpha\mapsto \alpha^{\# k}$ 
of $\Gal(\bar{k}/k)$ acting on $T_\fl(\phi)$ for any $\fl\neq \fp$.)  
When $k=\F_\fp$, computing $P(x)$ is relatively easy; see \cite[$\S$5.1]{GP}. When $[k:\F_\fp]>1$, this calculation becomes 
more involved. An algorithm for computing $P(x)$ for $r=2$ and arbitrary $k$ is described in \cite[$\S$3]{GekelerTAMS}.  

Next, compute the index $\chi(B/A[\pi])$. There are known algorithms for computing a basis of the integral closure of 
$A$ in a field extension of $F$ given by an explicit polynomial (the polynomial in our case 
is $P(x)$); such an algorithm is implemented in \texttt{Magma}. 
The index $\chi(B/A[\pi])$ can be computed by expressing $\pi$ in a given $A$-basis of $B$. Alternatively,  
if $K/F$ is separable, then $\chi(B/A[\pi])$ can be computed from $\disc(A[\pi])$ and $\disc(B)$, 
since by \eqref{eq-discchi} 
$$\disc(A[\pi])=\chi(B/A[\pi])^2\cdot\disc(B).$$
(In fact, for our purposes, it is enough to have an upper bound on $\chi(B/A[\pi])$ which is already provided by $\disc(A[\pi])$.)

Having computed the index $\chi(B/A[\pi])$, one can produce a finite list of possible Frobenius indices $(b_1, \dots, b_{r-1})$.    
Indeed, by \eqref{eq-chichi},
$$
\chi(B/A[\pi])=\chi(B/\cE)\cdot  \chi(\cE/A[\pi]), 
$$
and $\chi(\cE/A[\pi])=(\prod_{i=1}^{r-1}b_i)$ divides $\chi(B/A[\pi])$. 
We get further constrains on possible $(b_1, \dots, b_{r-1})$ from the divisibilities (cf. Proposition \ref{thm3})
\begin{align*}
& b_i\mid b_j, \quad 1\leq i<j\leq r-1,\\
& b_ib_j\mid b_{i+j}, \quad i+j<r, \\
& \text{if $0<\deg(b_1)$, then $\deg(b_1)<\deg(b_2)<\cdots< \deg(b_{r-1})$},\\
& b_1^{r(r-1)}\mid \disc(A[\pi]). 
\end{align*}
We arrange all possible $(b_1, \dots, b_{r-1})$ by the degrees of products $\prod_{i=1}^{r-1}b_i$, from the highest to zero. 

\vspace{0.1in}

\textbf{Step 2.} Starting with the first entry in our list of possible $(b_1, \dots, b_{r-1})$, check if for all $i=1, \dots, r-1$ there are 
polynomials $f_i(x)\in A[x]$, $\deg_x(f_i(x))=i$, such that $f_i(\pi)\in b_i\cE$. 

Given a polynomial 
$$
g(x)=x^s+a_{s-1}x^{s-1}+\cdots +a_0,
$$
checking whether $g(\pi)\in b\cE$ can be done as follows. First, compute the residue of 
$\pi^s+\phi_{a_{s-1}}\pi^{s-1}+\cdots+\phi_{a_0}$ modulo $\phi_{b}$ using the right division 
algorithm in $k\{\tau\}$. If the residue is nonzero, then $g(\pi)\not\in b\cE$. If the residue is $0$, then 
$g(\pi)=u\phi_{b}$ for an explicit $u\in k\{\tau\}$ produced by the devision algorithm. Now check 
if  the commutation relation $u\phi_T=\phi_Tu$ holds in $k\{\tau\}$; this relation holds if and only if $u\in \cE$. 

Since we can assume that the coefficients of $f_i(x)\in A[x]$  
have degrees $< \deg(b_i)$ (as polynomials in $T$), there are only finitely many possibilities for $f_i(x)$.  
If for some possible choice of $f_1, \dots, f_{r-1}$ we have $f_i(\pi)\in b_i\cE$, then $(b_1, \dots, b_{r-1})$ 
is the Frobenius index of $\phi$. If none of the choices of $f_1, \dots, f_{r-1}$ work, then $(b_1, \dots, b_{r-1})$  
is not the Frobenius index and we move to the next possible Frobenius index. Since one of $(b_1, \dots, b_{r-1})$'s 
is the actual Frobenius index, this step will eventually find it. 
(There can be several ``candidate'' Frobenius indices satisfying the necessary condition of this step, i.e., the existence of $f_i$'s. 
One can distinguish the actual Frobenius index  among these ``candidate'' Frobenius indices 
using the maximality property of Frobenius indices given by Theorem \ref{thmFrobIndex}. Since we have arranged the list 
of possible Frobenius indices by decreasing degrees of $\prod_{i=1}^{r-1}b_i$, we always find the actual Frobenius index first.)

Finally, having determined the Frobenius index $(b_1, \dots, b_{r-1})$ of $\phi$ and the polynomials $f_1, \dots, f_{r-1}$ 
such that $f_i(\pi)\in b_i\cE$, 
we compute an explicit $A$-basis of $\cE$  in $k\{\tau\}$ by dividing $f_i(\pi)$ by $\phi_{b_i}$ using the division algorithm for $k\{\tau\}$. 

\vspace{0.1in} 

We have implemented the above algorithm in \texttt{Magma} and computed the Frobenius indices 
and bases of endomorphism rings for various Drinfeld modules of rank $r=2$ and $3$. For rank $2$  
this algorithm corroborates the data found by our previous (different) algorithm \cite{GP}. For rank $3$, we have found 
some examples where $b_1$ and $b_2$ have positive degrees. 

\begin{example}\label{example3.3}
	Let $q=5$, $r=3$, $\fp=T^6+3T^5+T^2+3T+3$, and $k=\F_\fp$. Let $\phi:A\to \F_\fp\{\tau\}$ be given by 
	$$
	\phi_T=t+t\tau+t\tau^2+\tau^3, 
	$$
	where $t$ denotes the image of $T$ under the canonical reduction map $A\to \F_\fp$. The minimal polynomial of $\pi$ is 
	$$
	P(x)=f(x)=x^3 +2T^2x^2 +(3T^4  +T^2+3T +1)x+4\fp
	$$
	From this we compute that 
	\begin{align*}\disc(A[\pi]) & =(T+4)^6(T^4 + 2T^3 + 4T^2 + 3T + 4),\\ 
	\disc(B) &=(T^4 + 2T^3 + 4T^2 + 3T + 4).
	\end{align*}
	Hence $\chi(B/A[\pi])=(T+4)^3$. We deduce that either $b_1=T+4$ and $b_2=(T+4)^2$, or 
$b_1=1$ and $b_2=(T+4)^n$ for some $0\leq n\leq 3$. The second step of our algorithm confirms that 
in fact $b_1=T+4$ and $b_2=(T+4)^2$. In particular, $\cE=B$. Moreover, the corresponding polynomials are 
$
f_1(x)=x+4$ and $f_2(x)=(x+4)^2$. 
An $A$-basis of $\cE$ is given by 
$$
e_1=1, \quad e_2=\frac{\pi+4}{T+4}, \quad e_3=e_2^2. 
$$
Finally, the element in $\F_\fp\{\tau\}$ corresponding to $e_2$ is 
$$
e_2 = \tau^3 + (2t^5 + 3t^4 + t + 1)\tau^2 + (4t^3 + 2t + 3)\tau + t^5 + 4t^4 + 4t^3 + 
4t^2 + 3.$$
\end{example}


\begin{rem}\label{remKP}
	In \cite{KP}, Kuhn and Pink gave a different algorithm for computing $\End_k(\phi)$. They work in 
	the most general setting where $A$ is a finitely generated normal integral domain of transcendence degree $1$ over a finite field $\F_q$, 
	$k$ is an arbitrary finitely generated field, 
	and no restrictions on $D$ are imposed. On the other hand, the emphasis 
	of \cite{KP} is on the existence of a deterministic algorithm that computes $\End_k(\phi)$ rather that its practicality, so some of the details of the 
	algorithm are left out. 
	
	In the case where $A=\F_q[T]$ and $k$ is finite, the approach of Kuhn and Pink is the following. 
	Consider $k\{\tau\}$ as a free module of finite rank over $R:=\F_q[\pi]$. (Note that $R$ is the center of $k\{\tau\}$). 
	Choose an $R$-basis of $k\{\tau\}$. For example, if 
	$n=[k:\F_q]$ and $\alpha_0, \dots, \alpha_{n-1}$ is an $\F_q$-basis of $k$, then 
	$\{\beta_{ij}:=\alpha_i\tau^j \mid 0\leq i,j\leq n-1\}$ 
	is a basis of $k\{\tau\}$ over $R$. Express $\phi_T\in k\{\tau\}$ in terms of this basis $\phi_T=\sum_{0\leq i,j\leq r-1}m_{ij}\beta_{ij}$. 
	Let $u=\sum_{0\leq i,j\leq r-1}x_{ij}\beta_{ij}$, where $x_{ij}$ are indeterminates. Now $u\in \End_k(\phi)$ if and only if 
	$u\phi_T=\phi_Tu$. This leads to a system of linear equations in $x_{ij}$'s (note that $\beta_{ij}$'s do not commute with 
	each other, so expanding both sides $u\phi_T=\phi_Tu$ in terms of the chosen basis leads to nontrivial linear equations 
	for $x_{ij}$). Choosing a basis for the space of solutions of the resulting system of linear equations gives a basis $e_1, \dots, e_s\in k\{\tau\}$ 
	of $\End_k(\phi)$ as an $R$-module. Next, one computes the action of $\phi_T$ on on this basis, which 
	gives an explicit matrix for the action of $\phi_T$ as an $R$-linear transformation of $\End_k(\phi)$. 
	It is then claimed in \cite{KP} (proof of Proposition 5.14) that this calculation yields a basis of $\End_k(\phi)$ as an $A$-module, 
	although the details of this deduction are not explained. We have not pursued this line of calculations, so we are unable to say 
	how complicated it is in practice, and whether it suffices for deducing the finer number-theoretic properties of 
	$\End_k(\phi)$, such as its discriminant over $A$ or the Frobenius index. 
\end{rem}

\section{Matrix of the Frobenius automorphism}\label{sFrob}

Let the notation and assumptions be as in Section \ref{sEndom}. In particular, $\phi$ 
is a Drinfeld module of rank $r$ over a finite extension $k$ of $\F_\fp$, and $\phi$ satisfies \eqref{eq-assumption}. 
Moreover, $\cE=\End_k(\phi)$, $A[\pi]\subset \cE$ 
is the suborder generated by the Frobenius endomorphism of $\phi$, and $B$ is the integral closure of $A$ in $F(\pi)$.

Let 
$$
P(x)=x^r+c_{r-1}x^{r-1}+\cdots+c_1x+c_0
$$ 
be the minimal polynomial of $\pi$ over $A$, and 
$$
f_{i}(x)=x^i+\sum_{j=0}^{i-1}a_{ij}x^j, \qquad  1\leq i\leq r-1
$$
be the polynomial from Theorem \ref{thm2}. Multiplication by $\pi$ induces an $A$-linear 
transformation of $\cE$. The matrix of this transformation with respect to the basis in Theorem \ref{thm2} has the form 
\begin{equation}\label{eqF_k}
\cF_k:=\begin{bmatrix} 
-a_{10} & \ast & \cdots & \ast & \ast \\ 
b_1 & a_{10}-a_{21} & \cdots & \ast & \ast\\
0 & \frac{b_2}{b_1} & a_{21}-a_{32} & \ast & \ast  \\ 
\vdots &  & \ddots  & &  \\ 
0 & 0 & \cdots & \frac{b_{r-1}}{b_{r-2}} &a_{r-1, r-2}-c_{r-1}
\end{bmatrix}. 
\end{equation}
(If $\cE=A[\pi]$, then $b_1=\cdots=b_{r-1}=1$, all $a_{ij}=0$, and $\cF_k$ is simply the companion matrix of $P(x)$.)

\begin{example}
	The entries of $\cF_k$ marked by $\ast$ are complicated expressions in coefficients of $f_i$ and $P$. 
	
	For $r=2$, the full matrix is 
	$
	\begin{bmatrix} 
	-a_{10} & \frac{-a_{10}(a_{10}-c_1)-c_0}{b_1} \\ 
	b_1 & a_{10}-c_1
	\end{bmatrix}$. 
	When $r=2$ and $q$ is odd, it is easy to show that $a_{10}=c_1/2$. It then follows from \eqref{eq-discchi} that $\cF_k$ is 
	$\begin{bmatrix} 
	-\frac{c_1}{2} & \frac{b_1\cdot \disc(\cE)}{4} \\ 
	b_1 & -\frac{c_1}{2}
	\end{bmatrix}$.
	
	For $r=3$, the full matrix is 
\begin{equation}\label{eqFrobr=3}
\begin{bmatrix} 
-a_{10} & \frac{a_{10}(a_{21}-a_{10})-a_{20}}{b_1} & 
\frac{a_{10}a_{21}(a_{21}-c_2)-a_{10}(a_{20}-c_1)-a_{20}(a_{21}-c_2)-c_0}{b_2} \\ 
b_1 & a_{10}-a_{21} &  \frac{(a_{20}-c_1)-a_{21}(a_{21}-c_2)}{b_2/b_1}\\
0 & \frac{b_2}{b_1} & a_{21}-c_2  \\ 
\end{bmatrix}. 
\end{equation}
 As an explicit example, the matrix corresponding to Example \ref{example3.3} 
is 
\begin{equation}\label{eqFpEx3.3}
\begin{bmatrix} 
1 & 0 & T^4+T^2+2T+1\\ 
T+4 & 1 &  2T^3 +2T^2 +2T+4\\
0 & T+4 & 3(T^2+1) \\ 
\end{bmatrix}. 
\end{equation}
\end{example}

\begin{rem}
Even though fractions appear in $\cF_k$, all entries of this matrix are in $A$. This implies that there are non-obvious congruence  
	relations between the coefficients of $f_i$, $P$, and the Frobenius indices $b_j$. For example, from \eqref{eqFrobr=3} we get 
	$a_{10}(a_{21}-a_{10})\equiv a_{20}\Mod{b_1}$. Also note that by Proposition \ref{thm3}, $b_1$ divides all $b_{i}/b_{i-1}$, $2\leq i\leq r-1$, appearing 
	below the main diagonal in $\cF_k$, so if $n\mid b_1$, then $\cF_k$ is upper-triangular modulo $n$. In fact, it follows 
	from Theorem 3.1 in \cite{GP} that if $n\mid b_1$ then $\cF_k$ modulo $n$ is a scalar matrix. 
\end{rem}

Let $\fl\lhd A$ be a prime different from $\fp$. 
The arithmetic Frobenius automorphism $\Frob_k\in \Gal(\bar{k}/k)$ naturally acts on 
$T_\fl(\phi)$. Let $\mathrm{ch}_k(x)\in A_\fl[x]$ denote the characteristic polynomial of $\Frob_k\in \Aut_{A_\fl}(T_\fl(\phi))\cong \GL_r(A_\fl)$. 
The conjugacy class of $\Frob_k$ in $\Aut_{F_\fl}(T_\fl(\phi)\otimes F_\fl)\cong \GL_r(F_\fl)$ is uniquely 
determined by $\mathrm{ch}_k(x)$ because $T_\fl(\phi)\otimes F_\fl$ is a semi-simple $F_\fl[\Frob_k]$-module; cf. \cite{Taguchi}. 
On the other hand, $\mathrm{ch}_k(x)$ alone is not sufficient for determining the 
conjugacy class of $\Frob_k$ in $\Aut_{A_\fl}(T_\fl(\phi))$. 

\begin{thm}\label{thmTateFrob}
	Assume $T_\fl(\phi)$, under the natural action of 
	$\cE_\fl:=\cE\otimes_A A_\fl$, is a free module of rank $1$. 
	Then the matrix $\cF_k$ describes the action of $\Frob_k$ on $T_\fl(\phi)$, with respect to a suitable $A_\fl$-basis.  
\end{thm}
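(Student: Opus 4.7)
The strategy is to identify the action of $\Frob_k$ on $T_\fl(\phi)$ with the action of $\pi \in \cE$ on $\cE_\fl$, and then read off the matrix directly from the basis supplied by Theorem \ref{thm2}.

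First I would verify that the arithmetic Frobenius and the endomorphism $\pi$ induce the same operator on the Tate module. By definition $\pi=\tau^{[k:\F_q]}\in k\{\tau\}$, which as a map on $\bar k$ is $x\mapsto x^{\#k}$; this is precisely $\Frob_k$. Since for $\fl\neq \fp$ we have $\phi[\fl^n]\subset \bar k^{\sep}$ and $\cE$ acts on $\phi[\fl^n]$ through its embedding into $k\{\tau\}$, the action of $\pi\in\cE$ on $T_\fl(\phi)$ coincides with the natural Galois action of $\Frob_k$. Moreover, the actions of $A_\fl$ coming from $A\hookrightarrow \cE$ and from the Tate module structure agree, so $T_\fl(\phi)$ carries a canonical structure of $\cE_\fl$-module that is compatible with the $\Frob_k$-action in the sense that $\Frob_k=\pi\cdot(-)$.

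Next, invoke the hypothesis: fix an $\cE_\fl$-module isomorphism $\theta: \cE_\fl \xrightarrow{\sim} T_\fl(\phi)$. By Theorem \ref{thm2}, the elements
\[
e_1=1,\quad e_2=\frac{f_1(\pi)}{b_1},\quad \ldots,\quad e_r=\frac{f_{r-1}(\pi)}{b_{r-1}}
\]
form an $A$-basis of $\cE$; after base change the elements $e_i\otimes 1$ form an $A_\fl$-basis of $\cE_\fl$. Define $v_i:=\theta(e_i\otimes 1)\in T_\fl(\phi)$; these constitute an $A_\fl$-basis of $T_\fl(\phi)$. Because $\theta$ is $\cE_\fl$-linear, the matrix of $\pi\cdot(-)$ on $T_\fl(\phi)$ with respect to the basis $v_1,\ldots,v_r$ equals the matrix of $\pi\cdot(-)$ on $\cE_\fl$ with respect to $e_1\otimes 1,\ldots,e_r\otimes 1$. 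By the very definition of $\cF_k$ in \eqref{eqF_k}, this matrix is $\cF_k$. Combining with the first step, $\Frob_k$ acts on $T_\fl(\phi)$ via $\cF_k$ in the basis $v_1,\ldots,v_r$.

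The only potentially subtle point is the identification $\Frob_k=\pi$ as operators on $T_\fl(\phi)$, and in particular the compatibility of the two $A_\fl$-module structures on $T_\fl(\phi)$ (one coming from the Galois-theoretic Tate module and the other from its $\cE_\fl$-module structure via $A\hookrightarrow\cE$). Once that is unwound from definitions, the rest of the argument is formal: the freeness of rank one over $\cE_\fl$ lets us transport the explicit $A$-basis of $\cE$ from Theorem \ref{thm2} to an $A_\fl$-basis of $T_\fl(\phi)$ in which multiplication by $\pi$, and hence $\Frob_k$, is given by $\cF_k$.
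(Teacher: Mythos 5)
Your proposal is correct and follows essentially the same route as the paper's own proof: identify $\Frob_k$ with multiplication by $\pi$ on $T_\fl(\phi)$, use the rank-one freeness hypothesis to transport the basis of Theorem \ref{thm2} along an $\cE_\fl$-linear isomorphism $\cE_\fl\cong T_\fl(\phi)$, and read off $\cF_k$ as the matrix of multiplication by $\pi$ in that basis. Your write-up merely spells out the details (e.g.\ that $\pi=\tau^{[k:\F_q]}$ is literally the map $x\mapsto x^{\#k}$ and the compatibility of the two $A_\fl$-structures) that the paper leaves implicit.
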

\begin{proof} 
	The action of $\Frob_k$ on $T_\fl(\phi)$ 
	agrees with the action induced by $\pi\in \cE$. 
	If the assumption of the theorem holds, then there is an isomorphism $T_\fl(\phi)\cong \cE_\fl$ compatible 
	with the actions of $\pi$ on both sides. For the choice of a basis of $\cE$ from Theorem \ref{thm2}, $\pi$ acts on $\cE$ by the 
	matrix $\cF_k$. 
\end{proof}

\begin{rem}
Note that $\cE_\fl\otimes F_\fl$  is a semi-simple $F_\fl$-algebra which 
acts faithfully on $T_\fl(\phi)\otimes F_\fl$, so $T_\fl(\phi)\otimes F_\fl$ is free of rank $1$ over $\cE_\fl\otimes F_\fl$. On the other hand, as we will see later 
in this section, $T_\fl(\phi)$  is not always free over $\cE_\fl$. Also note 
that in our case the characteristic polynomial $\mathrm{ch}_k(x)$  is the minimal polynomial $P(x)$ 
of $\pi$ over $A$. 
\end{rem}

Let $\Phi: A\to F\{\tau\}$ be a Drinfeld module of rank $r$ over $F$. Let $\fp$ be a prime of good reduction of $\Phi$. 
Denote by $\phi$ the reduction of $\Phi$ modulo $\fp$. Let $\cE=\End_{\F_\fp}({\phi})$ and $A[\pi]\subset \cE$ 
be its suborder generated by the Frobenius endomorphism $\pi= \tau^{\deg(\fp)}$ of ${\phi}$. 
Note that since we are working over the field $\F_\fp$, the assumption \eqref{eq-assumption} is satisfied for $\phi$;  
cf. \cite[Prop. 2.1]{GP}. Denote by $\cF(\fp)$ the matrix \eqref{eqF_k} for $\phi$ over $\F_\fp$. 

\begin{thm}\label{thmFrobMatrix}
	Let $n\in A$ be a nonzero element not divisible by $\fp$. The Galois extension $F(\Phi[n])/F$ is unramified at $\fp$. 
	Suppose for every prime $\fl\lhd A$ dividing $n$ the Tate module $T_\fl({\phi})$ is a free $\cE_\fl$-module of rank $1$. Then 
	the integral matrix $\cF(\fp)$, when reduced modulo $n$, represents the class of the Frobenius at $\fp$ 
	in $\Gal(F(\Phi[n])/F)\subseteq \GL_r(A/nA)$. 
\end{thm}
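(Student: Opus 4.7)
The plan is to reduce the claim to describing the action of $\pi$ on $\phi[n]$, and then to patch together, via the Chinese Remainder Theorem, the $\fl$-primary Tate-module calculations supplied by Theorem \ref{thmTateFrob}. The unramifiedness of $F(\Phi[n])/F$ at $\fp$ is standard: since $\fp$ is a prime of good reduction and $\fp\nmid n$, reduction modulo any prime of $\overline{F}$ above $\fp$ induces a Galois-equivariant bijection $\Phi[n]\xrightarrow{\sim}\phi[n]$ on which the decomposition group at $\fp$ acts through $\Gal(\overline{\F}_\fp/\F_\fp)$. Under this identification, the Frobenius at $\fp$ acts as the $q^{\deg(\fp)}$-power map, which on $\phi[n]$ is precisely multiplication by $\pi=\tau^{\deg(\fp)}$. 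It thus suffices to exhibit an $(A/nA)$-basis of $\phi[n]$ with respect to which $\pi$ acts by $\cF(\fp)\bmod n$.

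Write $n=\prod_{\fl\mid n}\fl^{e_\fl}$. The Chinese Remainder Theorem gives an isomorphism of $A[\pi]$-modules
$$
\phi[n]\cong\bigoplus_{\fl\mid n}\phi[\fl^{e_\fl}],\qquad \phi[\fl^{e_\fl}]\cong T_\fl(\phi)/\fl^{e_\fl}T_\fl(\phi).
$$
For each $\fl\mid n$, the freeness hypothesis places Theorem \ref{thmTateFrob} at our disposal: transporting the basis of $\cE$ from Theorem \ref{thm2} across an isomorphism $\cE_\fl\xrightarrow{\sim}T_\fl(\phi)$ of $\cE_\fl$-modules produces an $A_\fl$-basis of $T_\fl(\phi)$ with respect to which $\pi$ is represented by the matrix $\cF(\fp)\in\Mat_r(A)$. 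Reducing this basis modulo $\fl^{e_\fl}$ gives an $(A/\fl^{e_\fl})$-basis of $\phi[\fl^{e_\fl}]$ on which $\pi$ acts by $\cF(\fp)\bmod\fl^{e_\fl}$, and collecting these across the primes dividing $n$ yields the desired $(A/nA)$-basis of $\phi[n]$. Since the Frobenius conjugacy class in $\Gal(F(\Phi[n])/F)\subseteq\GL_r(A/nA)$ is well-defined only up to conjugation, $\cF(\fp)\bmod n$ represents it.

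The main obstacle, and the reason the rank-one freeness of $T_\fl(\phi)$ over $\cE_\fl$ is crucial, lies in invoking Theorem \ref{thmTateFrob} correctly: the \emph{integral} matrix $\cF(\fp)$, not merely its $F_\fl$-conjugacy class, must represent $\pi$ on $T_\fl(\phi)$ for every $\fl\mid n$. The characteristic polynomial $P(x)$ alone determines only the semisimple conjugacy class over $F_\fl$, but the finer $A_\fl$-lattice data recorded by the subdiagonal entries $b_i/b_{i-1}$ of $\cF(\fp)$ genuinely depends on the freeness, which can fail when $\cE_\fl$ is not Gorenstein. Once each $\fl$-component is in hand, assembling them via CRT is routine.
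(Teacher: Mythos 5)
Your proposal is correct and follows essentially the same route as the paper: identify $\Phi[n]$ with $\phi[n]$ compatibly with the Frobenius (unramifiedness cited as standard), then use the rank-one freeness of $T_\fl(\phi)$ over $\cE_\fl$ to identify $\phi[n]$ with $\cE/n\cE$ and read off the action of $\pi$ in the basis of Theorem \ref{thm2} as $\cF(\fp)\bmod n$. The only difference is expository: you make explicit, via CRT and reduction of the Tate modules modulo $\fl^{e_\fl}$, the step that the paper compresses into the single assertion $\phi[n]\cong\cE/n\cE$.
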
 
\begin{proof}
	The fact that $\fp$ is unramified in $F(\Phi[n])/F$ is well-known, since $\fp$ is a prime of good reduction for $\Phi$ and does not divide $n$; 
	cf. \cite{Takahashi}. In fact, by \cite{Takahashi}, the Tate module $T_\fl(\Phi)$ is unramified at $\fp$, i.e., for any place $\bar{\fp}$ in $F^\sep$ 
	extending $\fp$, the inertia group of $\bar{\fp}$ acts trivially on $T_\fl(\Phi)$. There is a canonical isomorphism 
	$T_\fl(\Phi)\cong T_\fl({\phi})$ which is compatible with the action of 
	a Frobenius element in the decomposition group of $\bar{\fp}$ on $T_\fl(\Phi)$ and the action of the arithmetic 
	Frobenius automorphism $\Frob_\fp\in \Gal(\overline{\F}_\fp/\F_\fp)$ on $T_\fl({\phi})$; cf. \cite[p. 479]{Takahashi}. On the other hand, the action of $\Frob_\fp$ on $T_\fl({\phi})$ agrees 
	with the action induced by $\pi\in \cE$. 
	
	If the assumption of the theorem holds, then there is an isomorphism ${\phi}[n]\cong \cE/n\cE$ compatible 
	with the actions of $\pi$ on both sides. For the choice of a basis of $\cE$ from Theorem \ref{thm2}, $\pi$ acts on $\cE/n\cE\cong (A/nA)^r$ by the 
	matrix $\cF(\fp)$ reduced modulo $n$. Combining this with the isomorphism $\Phi[n]\cong {\phi}[n]$ compatible with 
	the action of Frobenius automorphism on both sides, we see that $\cF(\fp)\Mod{n}$ indeed represents the class of the Frobenius at $\fp$ 
	in $\Gal(F(\Phi[n])/F)\subseteq \GL_r(A/nA)$.  
\end{proof}

Theorems \ref{thmTateFrob} and \ref{thmFrobMatrix} are analogues of a result  of Duke and T\'oth \cite{DT} for elliptic curves (see also 
Theorems 2 and 3 in \cite{Centeleghe}). 

Theorem \ref{thmFrobMatrix} essentially says that the matrix $\cF(\fp)\in \Mat_r(A)$ is a ``universal'' matrix 
of the Frobenius automorphism at $\fp$ in the division fields of $\Phi$, in the sense that to get a matrix in the conjugacy class of 
the Frobenius in the Galois groups of different division fields $F(\Phi[n])$  we just need to reduce $\cF(\fp)$ 
modulo the correspond $n$. But there is a technical assumption in the theorem about the freeness 
of the Tate modules of $\phi$ as modules over the endomorphism ring of ${\phi}$. For the rest of this section 
we examine this assumption more carefully and show that it is a mild assumption, although quite subtle.  
Our considerations are motivated by \cite[$\S$4]{ST} and \cite{Centeleghe}. 

\begin{defn}\label{defGor} Let $\fl\lhd A$ be a prime. Let $R$ be a finite flat local $A_\fl$-algebra with maximal ideal $\fM$. 
	Put $\bar{R}=R/\fl R$. It is an artinian local ring. We denote its maximal ideal by $\overline{\fM}$. 
	The following statements are equivalent (see \cite[Prop. 1.4]{Tilouine}, \cite{Bass}, \cite[$\S$18]{Matsumura}): 
	\begin{enumerate}
		\item $\Hom_{A_\fl}(R, A_\fl)$ is free of rank $1$ over $R$. 
		\item $\Hom_{\F_\fl}(\bar{R}, \F_\fl)$ is free of rank $1$ over $\bar{R}$. 
		\item $\bar{R}[\overline{\fM}]=\{a\in \bar{R}\mid ma=0 \text{ for all }m\in \overline{\fM}\}$ 
		is $1$-dimensional over $\overline{R}/\overline{\fM}$. 
	\end{enumerate}
	We say that $R$ is \textit{Gorenstein} if it satisfies these conditions. We say that a finite flat (not necessarily local) $A_\fl$-algebra $R$ is 
	\textit{Gorenstein} if its localization at every maximal ideal is a Gorenstein local ring. 
\end{defn}

\begin{thm}
	Let $\fl\lhd A$ be a prime different from $\fp$. If $\cE_\fl$ is a Gorenstein ring, then $T_\fl(\phi)$ is a free $\cE_\fl$-module 
	of rank $1$. 
\end{thm}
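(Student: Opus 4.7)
The plan is to reduce, via Nakayama's lemma and decomposition of $\cE_\fl$ into local factors, to a short fact about Artinian Gorenstein local rings. First, since $\cE$ is an $A$-order in the field $K$, the algebra $\cE_\fl$ decomposes as $\prod_{\mathfrak{L} \mid \fl}\cE_\mathfrak{L}$, indexed by the primes of $\cE$ above $\fl$, and $T_\fl(\phi)$ decomposes correspondingly into $\cE_\mathfrak{L}$-components. Each factor $\cE_\mathfrak{L}$ is Gorenstein local by hypothesis, and (as noted in the remark following Theorem \ref{thmTateFrob}) each summand of $T_\fl(\phi)$ is free of rank $1$ over the corresponding $\cE_\mathfrak{L} \otimes_{A_\fl} F_\fl$. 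So I would work locally: fix $\mathfrak{L} \mid \fl$, set $R = \cE_\mathfrak{L}$ with maximal ideal $\fM$, and let $M$ denote the $\mathfrak{L}$-component of $T_\fl(\phi)$, a free $A_\fl$-module of rank $n := \rank_{A_\fl}(R)$.

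Next, by Nakayama's lemma, proving $M \cong R$ amounts to showing that $M$ is generated by a single element over $R$, since the resulting surjection $R \twoheadrightarrow M$ is forced to be an isomorphism by comparing $A_\fl$-ranks. Passing to $\bar R := R/\fl R$ and $\bar M := M/\fl M$, this reduces to showing $\bar M$ is cyclic over $\bar R$. Three observations are key: $\bar R$ is Artinian local Gorenstein (Definition \ref{defGor} is phrased exactly in terms of $\bar R$); $\bar M$ is the $\mathfrak{L}$-component of $\phi[\fl]$, which is a faithful $\bar R$-module by the argument outlined in Remark \ref{rem3.2} (no element of $\cE \setminus \fl\cE$ annihilates $\phi[\fl]$, because any $u\in \cE$ killing $\phi[\fl]$ is right-divisible by $\phi_\fl$ in $k\{\tau\}$, and the quotient lies in $\cE$ since $\phi_\fl$ is central); and $\dim_{\F_\fl} \bar M = n = \dim_{\F_\fl} \bar R$, so $\bar M$ and $\bar R$ have the same length as $\bar R$-modules.

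The problem therefore reduces to the following lemma, which I would prove separately: any faithful finitely generated module $\bar M$ over an Artinian Gorenstein local ring $\bar R$ whose length equals the length of $\bar R$ is isomorphic to $\bar R$. By Definition \ref{defGor}(3), the socle of $\bar R$ is $1$-dimensional over the residue field. Since $\bar R$ is Artinian local with a $1$-dimensional socle, this socle is the unique simple submodule of $\bar R$, and hence is contained in every nonzero ideal of $\bar R$ (any nonzero ideal, being Artinian, contains a simple submodule). Let $s_0$ generate this socle. By faithfulness of $\bar M$ there exists $m_0 \in \bar M$ with $s_0 m_0 \neq 0$, so $s_0 \notin \mathrm{ann}_{\bar R}(m_0)$; essentiality of the socle then forces $\mathrm{ann}_{\bar R}(m_0) = 0$. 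Thus $r \mapsto rm_0$ gives an injection $\bar R \hookrightarrow \bar M$, which is an isomorphism by equality of lengths.

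The main obstacle is isolating the correct Artinian--Gorenstein lemma above; once one recognizes that $1$-dimensionality of the socle (the form of the Gorenstein condition used in Definition \ref{defGor}) together with the Artinian local hypothesis yields essentiality of the socle, the remainder is formal, being a chain of standard reductions (decomposition into local factors, Nakayama, passage to the residue characteristic).
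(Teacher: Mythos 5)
Your proposal is correct, but it proves the theorem by a genuinely different route than the paper. The paper's proof invokes Bass's structure theory of torsion-free modules over Gorenstein orders: by Theorem 6.2 and Proposition 7.2 of Bass, $T_\fl(\phi)$ is either projective over $\cE_\fl$ or is a module over a strictly larger order, and the second alternative is excluded by Yu's theorem $\cE_\fl\cong\End_{A_\fl[\Gal(\overline{\F}_\fp/\F_\fp)]}(T_\fl(\phi))$; projectivity over the semilocal ring $\cE_\fl$ then gives freeness, and a rank count gives rank $1$. You avoid both of these external inputs: you decompose $\cE_\fl$ into its local factors (legitimate, since $A_\fl$ is complete), reduce modulo $\fl$ and apply Nakayama, and then prove directly the Artinian lemma that a faithful module over a Gorenstein Artinian local ring whose length equals that of the ring is free of rank one, using precisely formulation (3) of Definition \ref{defGor} (one-dimensionality, hence essentiality, of the socle). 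The length count via the freeness of $T_\fl(\phi)\otimes F_\fl$ over $\cE_\fl\otimes F_\fl$ (the remark after Theorem \ref{thmTateFrob}) is fine, and the arithmetic input replacing Yu's theorem is the faithfulness of $\phi[\fl]$ as an $\cE/\fl\cE$-module, proved by right division by $\phi_\fl$ in $k\{\tau\}$ -- the same elementary argument as in Remark \ref{rem3.2} and the proof of Theorem 1.2 of \cite{GP}. What your route buys is a self-contained, essentially elementary proof; what the paper's route buys is a statement valid for any torsion-free $\cE_\fl$-module and a transparent role for the Tate-type isomorphism \eqref{eqTateIsom}.

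One small repair in the faithfulness step: your justification that the quotient $w$ in $u=w\phi_\fl$ lies in $\cE$ ``since $\phi_\fl$ is central'' is not accurate -- $\phi_\fl$ is in general not central in $k\{\tau\}$ (the center is $\F_q[\pi]$, as noted in Remark \ref{remKP}). What you actually need is only that $\phi_\fl$ commutes with $\phi_T$, both lying in the commutative ring $\phi(A)$: from $u\phi_T=\phi_T u$ and $u=w\phi_\fl$ one gets $(w\phi_T-\phi_T w)\phi_\fl=0$, and since $k\{\tau\}$ has no zero divisors, $w\phi_T=\phi_T w$, i.e.\ $w\in\cE$; as $\cE$ is commutative under \eqref{eq-assumption}, this gives $u\in\fl\cE$ as required. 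With this one-line fix your argument is complete.
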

\begin{proof} 
	
	The ring $\cE_\fl$ is a finite flat $A_\fl$-algebra. 
	The module $T_\fl(\phi)$ is a torsion-free $\cE_\fl$-module.  
	Suppose $\cE_\fl$ is Gorenstein. Then by Theorem 6.2 and Proposition 7.2 in \cite{Bass}, either $T_\fl(\phi)$ 
	is a projective $\cE_\fl$-module or $T_\fl(\phi)$ is an $\cE_\fl'$-module for some $\cE_\fl\subsetneq \cE_\fl'\subset B\otimes_A A_\fl$. 
	Suppose the latter is the case. Let $G:=\Gal(\overline{\F}_\fp/\F_\fp)$. By \cite[Thm. 2]{JKYu}, we have an isomorphism 
	\begin{equation}\label{eqTateIsom}
	\cE_\fl \xrightarrow{\sim} \End_{A_\fl[G]}(T_\fl(\phi)). 
	\end{equation}
	Since $\cE_\fl'\otimes F_\fl=\cE_\fl\otimes F_\fl$ and $\cE_\fl\otimes F_\fl \xrightarrow{\sim} \End_{F_\fl[G]}(T_\fl(\phi)\otimes F_\fl)$, 
	the action of $\cE_\fl'$ on $T_\fl(\phi)$ has to commute with the action of $G$. Hence $\cE_\fl'\subseteq \End_{A_\fl[G]}(T_\fl(\phi))=\cE_\fl$. 
	This contradicts our earlier assumption. We conclude that $T_\fl(\phi)$ 
	is a projective $\cE_\fl$-module. Since $\cE_\fl$ is a semilocal ring, a projective module over $\cE_\fl$ is free 
	by \cite[Thm. 2.5 ]{Matsumura}. In particular, $T_\fl(\phi)$ is free. Finally, since the ranks of $T_\fl(\phi)$ and $\cE_\fl$ 
	over $A_\fl$ are the same, $T_\fl(\phi)$ is a free $\cE_\fl$-module of rank $1$. 
\end{proof}

\begin{prop}\label{propGor} Suppose one of the following conditions holds:
	\begin{enumerate}
		\item $\cE_\fl=A_\fl[\pi]$. 
		\item $r=2$.
		\item $\cE_\fl=B\otimes_A A_\fl$. 
	\end{enumerate}
	Then $\cE_\fl$ is Gorenstein. In particular, if $\fl$ does not divide $\chi(\cE/A[\pi])$ or $\chi(B/\cE)$, then $\cE_\fl$ is Gorenstein.
\end{prop}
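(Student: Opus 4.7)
The plan is to verify each of the three sufficient conditions separately, and then to deduce the final clause by identifying which case applies.

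For (1), the assignment $x \mapsto \pi$ gives an isomorphism $\cE_\fl \cong A_\fl[x]/(P(x))$, presenting $\cE_\fl$ as a complete intersection — a quotient of the regular ring $A_\fl[x]$ by a single monic nonzerodivisor — and complete intersections are Gorenstein. One can verify this concretely by producing an $\cE_\fl$-module generator for $\Hom_{A_\fl}(\cE_\fl, A_\fl)$, namely the functional $\mu$ defined by $\mu(\pi^{r-1}) = 1$ and $\mu(\pi^i) = 0$ for $i < r-1$. The pairing matrix $\bigl(\mu(\pi^i \pi^j)\bigr)_{0 \le i,j \le r-1}$ is anti-triangular with $1$'s along the anti-diagonal, so its determinant is $\pm 1$, and therefore $\mu$ freely generates $\Hom_{A_\fl}(\cE_\fl, A_\fl)$ over $\cE_\fl$, verifying condition (1) of Definition \ref{defGor}.

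For (2), when $r = 2$ I will show that every $A_\fl$-order between $A_\fl$ and $B_\fl$ is monogenic over $A_\fl$, so that the same argument as in case (1) — now applied to the (possibly reducible) minimal polynomial of the generator — gives the Gorenstein property. Since $B_\fl$ is free of rank $2$ over the PID $A_\fl$, the quotient $B_\fl/A_\fl$ is cyclic, so every $A_\fl$-suborder $\cE_\fl \subseteq B_\fl$ containing $A_\fl$ has the form $A_\fl + c B_\fl = A_\fl[c\beta]$ for some $c \in A_\fl$ and any chosen $A_\fl$-generator $\beta$ of $B_\fl$ modulo $A_\fl$. For (3), the minimal polynomial $P(x)$ of $\pi$ is separable in our setting, so $K/F$ is separable and $B$ is the integral closure of a Dedekind domain in a finite separable extension, hence Dedekind. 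Localizing at $A \setminus \fl$ shows that $B_\fl$ is a semilocal Dedekind domain; its localizations at maximal ideals are DVRs, which are regular and therefore Gorenstein.

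Finally, the ``in particular'' clause follows by identifying which case applies: $\fl \nmid \chi(\cE/A[\pi])$ implies $(\cE/A[\pi])_\fl = 0$, forcing $\cE_\fl = A_\fl[\pi]$ (case (1)); while $\fl \nmid \chi(B/\cE)$ implies $(B/\cE)_\fl = 0$, forcing $\cE_\fl = B_\fl$ (case (3)). The step I expect to be most delicate is case (2) when $K \otimes_F F_\fl$ splits as a product of local fields, so that $B_\fl$ is a product of DVRs rather than a single one; the ``$A_\fl + cB_\fl$'' classification of intermediate orders still applies and the generator $\alpha = c\beta$ satisfies a (now reducible) quadratic polynomial over $A_\fl$, but one must verify carefully that the reduction to case (1) still goes through uniformly in this split setting.
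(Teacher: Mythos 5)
Your proof is correct and follows the same skeleton as the paper's: case (2) is reduced to the monogenic case (1), and case (3) is handled by observing that $B\otimes_A A_\fl$ is regular in the appropriate sense (a product of discrete valuation rings), hence Gorenstein, while the final clause follows by identifying which case applies. The difference is that where the paper simply cites the literature for ``monogenic over $A_\fl$ implies Gorenstein'' and ``DVRs are Gorenstein'', you verify these directly: the anti-triangular pairing argument with the functional $\mu(\pi^{r-1})=1$, $\mu(\pi^i)=0$ for $i<r-1$, is a clean self-contained check of condition (1) of Definition \ref{defGor}, and it applies to $A_\fl[x]/(q)$ for \emph{any} monic $q$, which is exactly what makes your reduction of case (2) go through; moreover your classification of the intermediate orders as $A_\fl+cB_\fl=A_\fl[c\beta]$ is actually more careful than the paper's parenthetical claim that any element of $\cE_\fl$ outside $A_\fl$ generates it (which is false as literally stated, e.g.\ for $\fl\cdot c\beta$), and your worry about the split case is unfounded for the reason you give. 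Two small repairs are needed. First, in case (3) you assert that $P(x)$ is separable; this is neither justified nor needed (the paper itself hedges on separability of $K/F$ in Section \ref{sEndom}): $B$ is a Dedekind domain, finite as an $A$-module, in all cases (Krull--Akizuki, plus finiteness of integral closures for the finitely generated $\F_q$-algebra $A=\F_q[T]$), and that is all your argument uses. Second, in this paper $A_\fl$ denotes the \emph{completion} of $A$ at $\fl$, not the localization, so ``localizing at $A\setminus\fl$'' and calling $B_\fl$ a semilocal Dedekind \emph{domain} is not literally the right object; the fix is immediate, since finiteness of $B$ over $A$ gives $B\otimes_A A_\fl\cong\prod_{\lambda\mid\fl}\widehat{B}_\lambda$, a product of complete DVRs, and in any event the Gorenstein conditions of Definition \ref{defGor} depend only on $R/\fl R$, which is the same for the localized and completed rings --- but you should phrase the argument for the completed algebra to match the statement being proved.
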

\begin{proof}
	(1) If $\cE_\fl$ is generated over $A_\fl$ by one element, then $\cE_\fl$ is Gorenstein; cf. \cite[p. 329]{Tilouine}. 
	(2) If $r=2$, then $\cE_\fl$ is obviously generated by one element (any of the elements which is not in $A_\fl\subset \cE_\fl$). 
	(3) $B\otimes_A A_\fl$ is a product of discrete valuation rings, and such rings are Gorenstein (see again \cite[p. 329]{Tilouine}). 
\end{proof}


\begin{example}\label{example4.6}
Let $q=5$ and $\Phi:A\to F\{\tau\}$ be given by $\Phi_T=T+T\tau+T\tau^2+\tau^3$. Every prime of $A$ is a prime of good reduction for $\Phi$. Take 
	$\fp=T^6+3T^5+T^2+3T+3$. Then ${\phi}$, the reduction of $\Phi$ modulo ${\fp}$, is the Drinfeld module from Example \ref{example3.3}. We showed that in this case 
	$\cE=B$, so the assumption of Theorem \ref{thmFrobMatrix} is satisfied for every prime $\fl\neq \fp$. Note that the 
	matrix \eqref{eqFpEx3.3}, which is $\cF(\fp)$ associated to ${\phi}$, is congruent to the identity matrix modulo $n$ if and only if $n=T+4$. 
	This means that $\fp$ splits completely in $F(\Phi[n])$ if and only if $n=T+4$. 
\end{example}

\begin{example}
	We give an example where $\cE_\fl$ is not Gorenstein by changing $\fp$ in Example \ref{example4.6}.  
	Let $\Phi$ be as in that example, but $\phi$ be the reduction of $\Phi$ modulo $\fp=T^6 + 4T^4 + 4T^2 + T + 1$. 
	The minimal polynomial of $\pi$ is 
	$$
	P(x)=x^3 +2T^2x^2+(3T^4 + 2T^3 + 2T^2 + 1)x + 4\fp,
	$$
	and 
	\begin{align*}\disc(A[\pi]) & =(T+ 4)^6(T^4 + 3T^3 + T^2 + 2),\\ 
	\disc(B) &=T^4 + 3T^3 + T^2 + 2.
	\end{align*}
	Our algorithm shows that 
	$$b_1=1,\quad b_2=T+4, \quad \chi(\cE/A[\pi])=T+4, \quad \chi(B/\cE)=(T+4)^2, 
	$$
	and  an $A$-basis of $\cE$ is given by 
	$$e_1=1,\quad  e_2=\pi+4, \quad e_3=\frac{(\pi+4)^2}{T+4}. 
	$$
	(Although we will not need this, an $A$-basis of $B$ is given by $e_1, e_2/(T+4), e_3/(T+4)$.)
	
	Let $\fl=T+4$. We claim that $\cE_\fl$ is not Gorenstein. By a routine calculation one obtains the relations 
	\begin{align*}
	e_2^2 &= (T+4)e_3\\ 
	e_3^2 &=(T+1)(T+3)(T+4)^2(T^2+2)e_1 + (T+4)(T^3+3T+2)e_2 + (T+4)T^2e_3 \\ 
	e_2e_3 &= (T+3)(T+4)^2(T^2+2)e_1 + 2(T+2)(T+4)^2e_2 + 3(T+1)(T+4)e_3. 
	\end{align*}
	From this it is easy to see that $\cE_\fl$ is local with maximal ideal $\fM=(\fl, e_2, e_3)$. To prove that $\cE_\fl$ 
	is not Gorenstein, we check (3) from Definition \ref{defGor}. 
	In our case, $\bar{\cE}_\fl=\F_\fl+\F_\fl \bar{e}_2+\F_\fl \bar{e}_3$, $\overline{\fM}=(\bar{e}_2, \bar{e}_3)$, and 
\begin{equation}\label{eqe2e3}
	\bar{e}_2^2=\bar{e}_3^2=\bar{e}_2 \bar{e}_3=0. 
\end{equation}
	Hence $\bar{\cE}_\fl[\overline{\fM}]=\overline{\fM}$ is two-dimensional over $\F_\fl$, so $\cE_\fl$ is not Gorenstein.  

	Of course, the fact that $R:=\cE_\fl$ is not Gorenstein does not necessarily imply that $M:=T_\fl(\phi)$ is not free over $R$. For that, one 
	needs an additional calculation.  By a standard argument involving Nakayama's lemma 
one shows that $M$ is a free $R$-module of rank $1$ if and only if $\bar{M}=M/\fl M={\phi}[\fl]$ is a free $\bar{R}=R/\fl R$-module of rank $1$. 
We need to compute the action of $\bar{R}$ on $\phi[\fl]$ as a $3$-dimensional vector space over $A/\fl\cong \F_5$. 

Now $\phi[\fl]$ is the set of roots of the polynomial $\phi_\fl(x)=x^{125}+tx^{25}+tx^5+(t+4)x\in \F_\fp[x]$, 
where $t$ is the image of $T$ in $\F_\fp$. This polynomial decomposes over $\F_\fp$ into a product of irreducible 
polynomials all of which have either degree $1$ or degree $5$. One of the irreducible factors of $\phi_\fl(x)$ of degree $5$ is 
$g(x)=x^5 + (3t^3 + 2t^2 + 2t)x + t^5 + 3t^4 + 3t^2 + 2t$. Let $\alpha$ be a root of $g(x)$. Then the splitting field of $\phi_\fl(x)$ 
is $\F_\fp(\alpha)$. The following is an $\F_5$-basis of $\phi[\fl]$ 
in $\F_\fp(\alpha)$: 
$$
v_1 = t^5 + 2t^3 + t^2 + 3,\quad 
v_2 = \alpha, \quad 
v_3 = \alpha + t^5 + 3t^3 + 4t. 
$$
(We simply chose three, more-or-less random, roots $v_1, v_2, v_3$ of $\phi_\fl(x)$ and verified that they are linearly independent over $\F_5$.) 
To compute the action of $e_2$ and $e_3$ on $\phi[\fl]$ we use their explicit expressions in $\F_\fp\{\tau\}$ provided by our algorithm
\begin{align*}
e_2 & = \tau^6+4\\
e_3 & = \tau^9 +(3t^5 +2t^3 +2t^2 +1)\tau^8 +(t^5 +t^4 +4t^3 +4t^2 +t+3)\tau^7 \\ 
&+(3t^5 +t^4 +2t^2 +3t+1)\tau^6 + (2t^5 +3t^4 +3t^3 +t^2 +3)\tau^5 \\ 
& +(3t^4 +2t^3 +2t+4)\tau^4 +(4t^5 +t^2 +3t+2)\tau^3 +(2t^4 +t^3 +t^2 +4t+4)\tau^2 \\ 
& + (3t^5 +t^4 +3t^3 +4t^2 +t+1)\tau +4t^5 +4t^4 +t. 
\end{align*}
With respect to the basis $\{v_1, v_2, v_3\}$  (as column vectors), $e_2$ and $e_3$ correspond to the following matrices: 
$$
\bar{e}_2=\begin{bmatrix} 0 & 0 & 0 \\ 0 & 3 & 3 \\ 0 & 2 & 2\end{bmatrix}, \qquad 
\bar{e}_3=\begin{bmatrix} 0 & 0 & 0 \\ 4 & 3 & 3 \\ 1 & 2 & 2\end{bmatrix}. 
$$

$\bar{M}$ is a free $\bar{R}$-module of rank $1$ 
if and only if there is a vector $v\in\F_5^3$ such that 
$\{v, \bar{e}_2 v, \bar{e}_3 v\}$ are linearly independent over $\F_5$. Since 
$\det \begin{bmatrix} 
\begin{bmatrix} x_1 \\ x_2 \\ x_3 \end{bmatrix} & \bar{e}_2\begin{bmatrix} x_1 \\ x_2 \\ x_3 \end{bmatrix}& \bar{e}_3 
\begin{bmatrix} x_1 \\ x_2 \\ x_3 \end{bmatrix} \end{bmatrix} = 0$,
such a vector does not exists. 
Thus, in this example we encounter the strange phenomenon where 
$T_\fl(\phi)$ is not free over $\cE_\fl$. Note also that one can consider $\phi$ as a Drinfeld $\cE$-module 
of rank $1$ in the sense of Hayes \cite[p. 180]{Hayes}, and our calculation shows that, unlike the usual Drinfeld modules, 
${\phi}[\fl]$ is not isomorphic to $\cE/\fl\cE$ as an $\cE$-module. 

The matrix $\cF(\fp)$ for this example is 
$$
\cF(\fp)=
\begin{bmatrix} 
0	&	4		&	(T+4)^2(T^3 + 3T^2 + 2) \\
1	&	2   		&	2(T+2)(T+4)^2\\
0	&	T + 4	&	3(T+2)(T+3)\\ 
\end{bmatrix}
$$
Hence $\cF(\fp)\equiv \begin{bmatrix}
0	&	4		&	0 \\
1	&	2   		&	0 \\
0	&	0		&	1 \\ 
\end{bmatrix}\Mod{\fl}$. With respect to the basis $\{v_1, v_2, v_3\}$ of $\phi[\fl]$, the action of $\pi$ on $\phi[\fl]$ is 
given by 
$\begin{bmatrix}
1	&	0		&	0 \\
0	&	4   		&	3 \\
0	&	2		&	3 \\ 
\end{bmatrix}$, which is conjugate to $\cF(\fp)\ \mod\ {\fl}$ in $\GL_3(\F_5)$.  Thus, the conclusion of Theorem \ref{thmFrobMatrix} 
is still valid for $n=T+4$, even though its assumption fails. 
\end{example}


\bibliographystyle{acm}
\bibliography{Endom2HR.bib}

\end{document}